\documentclass[11pt]{article}
\usepackage[margin=1in]{geometry}
\usepackage[utf8]{inputenc}
\usepackage[T1]{fontenc}
\usepackage[english]{babel}
\usepackage{lmodern}
\usepackage{graphicx}
\usepackage{wrapfig}
\usepackage{amsmath}
\usepackage{caption}
\usepackage{paralist}
\usepackage{enumitem}
\usepackage{threeparttable}
\usepackage{subcaption}

\usepackage[linesnumbered,ruled,vlined]{algorithm2e}

\graphicspath{{./figures/}}

\usepackage[usenames]{color}
\definecolor{hellblau}{rgb}{0.2,0.4,1}
\definecolor{dunkelblau}{rgb}{0,0,0.8}
\definecolor{dunkelgruen}{rgb}{0,0.5,0}
\usepackage[
pdftex,
colorlinks,
linkcolor=dunkelblau,
urlcolor=dunkelblau,
citecolor=dunkelgruen,
bookmarks=true,
linktocpage=true,
pdfsubject={}
]{hyperref}
\urlstyle{same}

\usepackage{pdfpages}
\usepackage{amsmath}
\usepackage{amsthm}
\allowdisplaybreaks
\usepackage{amsfonts}
\theoremstyle{plain}
\newtheorem{satz}{Satz}[]
\newtheorem{theorem}[satz]{Theorem}

\newtheorem{proposition}[satz]{Proposition}
\newtheorem{corollary}[satz]{Corollary}	
\theoremstyle{remark}

\theoremstyle{definition}

\newtheorem*{conjecture}{Conjecture}

\begin{document}
\title
{Improved asymptotic upper bounds for the minimum number of pairwise distinct longest cycles in regular graphs}
\author{
{\sc Jorik JOOKEN\footnote{Department of Computer Science, KU Leuven Campus Kulak-Kortrijk, 8500 Kortrijk, Belgium}}
\footnote{E-mail address: jorik.jooken@kuleuven.be}}
\date{}

\maketitle
\begin{center}
\begin{minipage}{125mm}
{\bf Abstract.}
We study how few pairwise distinct longest cycles a regular graph can have under additional constraints. For each integer $r \geq 5$, we give exponential improvements for the best asymptotic upper bounds for this invariant under the additional constraint that the graphs are $r$-regular hamiltonian graphs. Earlier work showed that a conjecture by Haythorpe on a lower bound for this invariant is false because of an incorrect constant factor, whereas our results imply that the conjecture is even asymptotically incorrect. Motivated by a question of Zamfirescu and work of Chia and Thomassen, we also study this invariant for non-hamiltonian 2-connected $r$-regular graphs and show that in this case the invariant can be bounded from above by a constant for all large enough graphs, even for graphs with arbitrarily large girth.
\smallskip

{\bf Keywords.} Hamiltonian cycle; longest cycle; regular graph

\smallskip

\textbf{MSC 2020.} 05C45, 05C07, 05C12, 05C38

\end{minipage}
\end{center}

\vspace{1cm}

\section{Introduction}

The circumference $c(G)$ of a graph $G$ (the length of its longest cycle) is a well-studied parameter. A great number of interesting papers revolve around how small the number of pairwise distinct cycles with length $c(G)$, i.e. longest cycles, can be for various graph classes. A natural question is how this invariant is affected by the degrees of vertices in $G$. For many graph classes, the proven or conjectured lower bounds for this invariant are constants, whereas the upper bounds are often one of two extremes: they are either constants or exponential in the number of vertices (as will also be the case in the current paper).

We briefly discuss a number of important results from the literature on this topic that show how the current paper is connected with previous research. Tutte~\cite{Tu46} presented a theorem by Smith that every edge occurs in an even number of hamiltonian cycles of a hamiltonian cubic graph. From this, one can derive that every hamiltonian cubic graph has at least three pairwise distinct hamiltonian cycles. Thomason~\cite{Th78} later used his lollipop technique to generalize this lower bound to all hamiltonian graphs in which all vertices have an odd degree. For cubic graphs, there exist infinite families of graphs containing precisely three pairwise distinct hamiltonian cycles~\cite{Sc89,GMZ20}, but for other $r$-regular hamiltonian graphs with $r \geq 5$, the best known families contain an exponential number of hamiltonian cycles~\cite{Ha18, Za22}. Sheehan's famous conjecture~\cite{Sh75} from 1975 states that every hamiltonian 4-regular graph contains at least two distinct hamiltonian cycles. This longstanding conjecture remains unsolved until today, but luckily there is progress. For example, Thomassen~\cite{Th98} showed that every hamiltonian $r$-regular graph, with $r \geq 300$, contains at least two distinct hamiltonian cycles, whereas Haxell, Seamone and Verstraete~\cite{HSV07} improved this result to $r \geq 23$. Zamfirescu~\cite{Za22} recently described an infinite family of 4-regular graphs containing precisely 144 hamiltonian cycles, which is the best known upper bound. Gir\~{a}o, Kittipassorn and Narayanan~\cite{GKN19} proved that every graph with minimum vertex degree 3 having a hamiltonian cycle contains another long cycle, namely one with length at least $n-cn^{\frac{4}{5}}$, where $n$ is the order of the graph and $c$ is a constant. 

Apart from these examples, we also know of infinite families in other classes of graphs containing precisely one longest cycle, which is clearly the best possible. Goedgebeur, Meersman and Zamfirescu~\cite{GMZ20} reported on an infinite family of graphs with genus one that have precisely one hamiltonian cycle and at most one vertex of degree two. Entringer and Swart~\cite{ES80} described infinite families of graphs containing a unique hamiltonian cycle in which all vertices have degree 3 or 4, whereas Fleischner~\cite{Fl14} described such graphs in which the degrees are all 4 or 14. Recently, this result was strengthened by Brinkmann~\cite{Br22} to several sets of vertex degrees $D$, namely if the minimum of $D$ is a) 2 or b) 3 and $D$ contains an even degree or c) 4 and all other elements in $D$ are at least 10. Chia and Thomassen~\cite{CT12} showed the existence of an infinite family of non-hamiltonian cubic graphs with connectivity 2 having a unique longest cycle, whereas Zamfirescu~\cite{Za22} described such a connectivity 3 family. Zamfirescu also showed the existence of an infinite family of non-hamiltonian $r$-regular graphs with connectivity 1 having a unique longest cycle, for each integer $r \geq 3$.

This motivates us to introduce the following definitions. We define $h(n,r)$ as the minimum number of pairwise distinct longest cycles over all hamiltonian $r$-regular graphs on $n$ vertices if such graphs exist (note that such graphs exist when $n \geq r+1 \geq 3$ and $nr$ is even). Otherwise, we define $h(n,r)$ to be equal to $h(n+1,r)$ (this choice will make statements about asymptotic behaviour less cumbersome). Note that every longest cycle in a hamiltonian graph is of course a hamiltonian cycle and every hamiltonian graph is 2-connected. In a similar fashion, we define $h_2(n,r,g)$ as the minimum number of pairwise distinct longest cycles over all non-hamiltonian 2-connected $r$-regular graphs on $n$ vertices with girth equal to $g$ if such graphs exist (and equal to $h_2(n+1,r,g)$ otherwise).

In the current paper, we will be interested in asymptotic upper bounds for these two functions as the orders of the graphs grow (and all other parameters are fixed). All graphs in this paper are simple undirected graphs (loops and multiple edges are not allowed). The rest of this paper is structured as follows. In Section~\ref{sec:ham}, we give exponential improvements for the current best asymptotic upper bounds for $h(n,r)$ for all integers $r \geq 5$ by a constructive proof. This result also implies that a conjecture by Haythorpe~\cite{Ha18} on lower bounds for $h(n,r)$ is false in a strong sense, i.e. asymptotically. The new upper bounds are exponential in the number of vertices, but have a smaller base for the exponent than the previous best upper bounds. Motivated by work of Chia and Thomassen~\cite{CT12} and a question of Zamfirescu~\cite{Za22}, we show in Section~\ref{sec:nonham} that the situation is drastically different when one considers longest cycles in non-hamiltonian 2-connected regular graphs, even when we take into account girth constraints. More specifically, we will show for all integers $r,g \geq 3$ and all integers $n$ for which $nr$ is even and $n$ is large enough that $h_2(n,r,g)$ is bounded from above by a constant depending only on $r$ and $g$. Finally, we formulate questions that are interesting for future research in Section~\ref{sec:conclusions}.

\section{Improved asymptotic upper bounds for $h(n,r)$, where $r \geq 5$} \label{sec:ham}

There exist infinite families of 3-regular graphs containing precisely three pairwise distinct hamiltonian cycles~\cite{Sc89,GMZ20} and also 4-regular graphs with precisely 216~\cite{TZ21} and 144~\cite{Za22} pairwise distinct hamiltonian cycles for all large enough orders such that these graphs exist. This shows that $h(n,3)$ and $h(n,4)$ are asymptotically $O(1)$. However, the situation is quite different for the $r$-regular case when $r \geq 5$, because the best known asymptotic upper bounds are exponential. Haythorpe~\cite{Ha18} constructed graphs which show that $h(n,r) \leq (r-1)^2 ((r-2)!)^{\frac{n}{r+1}}$ for all integers $r \geq 3$ and $n=m(r+1)$ , where $m \geq 2$. Based on empirical evidence, he then also made the following lower bound conjecture.

\begin{conjecture}[Conjecture 3.1 from~\cite{Ha18}]
For $r \geq 5$ and $n \geq r+3$, all hamiltonian $r$-regular graphs on $n$ vertices have at least $(r-1)^2 ((r-2)!)^{\frac{n}{r+1}}$ hamiltonian cycles.
\end{conjecture}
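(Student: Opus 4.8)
We will show that the conjecture above is not merely off by a constant factor but asymptotically false: the plan is to exhibit, for every $r\ge 5$, $r$-regular hamiltonian graphs on $n$ vertices (for a dense set of $n$, extended to all large $n$ via the convention $h(n,r)=h(n+1,r)$) with at most $C_r\cdot\beta_r^{\,n}$ pairwise distinct longest cycles, where $\beta_r<((r-2)!)^{1/(r+1)}$ is an explicit constant. Since $(r-2)!>1$, this gives $C_r\beta_r^{\,n}=o\!\left((r-1)^2((r-2)!)^{n/(r+1)}\right)$, which contradicts the conjecture for all large $n$.

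The conceptual starting point is that Haythorpe's bound comes from a cyclic chain of $m$ blocks on $r+1$ vertices, each contributing a factor $(r-2)!$ to the hamiltonian cycle count; if one could insert, between consecutive blocks, an $r$-regular ``rigid'' segment on $\ell$ extra vertices whose internal vertices are forced (contributing no multiplicative freedom), the base would drop to $((r-2)!)^{1/(r+1+\ell)}<((r-2)!)^{1/(r+1)}$. So the plan is: (i) design a gadget $W$ on $N$ vertices consisting of a small branching core glued to a long rigid ``tube'' (a ladder/prism-type $r$-regular piece), $r$-regular except at a small constant-size interface with two ports; (ii) for each $m$, form $G_m$ by joining $m$ copies of $W$ cyclically, port to port, in a degree-preserving way, obtaining an $r$-regular graph on $n=mN$ vertices, and exhibit one explicit hamiltonian cycle of $G_m$ to confirm it is hamiltonian; (iii) prove a \emph{forcing lemma}: every hamiltonian cycle of $G_m$ restricts, inside each copy of $W$, to one of only $t$ admissible traversals, so that $G_m$ has at most $\mathrm{poly}(n)\cdot t^{\,m}=\mathrm{poly}(n)\cdot t^{\,n/N}$ longest cycles; (iv) check the arithmetic $t^{1/N}<((r-2)!)^{1/(r+1)}$ and absorb the constants, handling the parity of $nr$ and the residue of $n$ by one extra filler copy of the rigid tube, so that the estimate holds for all sufficiently large $n$.

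The heart of the proof is the forcing lemma. It is proved by analysing the edge cuts that separate consecutive copies of $W$: a hamiltonian cycle meets each such cut in a positive even number of edges, and using the rigidity of the tube part of $W$ together with the small size of the ports one must rule out all ``exotic'' cycles that weave repeatedly between copies or traverse a copy in a non-admissible way. I expect this cut analysis, together with the design of $W$ itself, to be the main obstacle: enlarging or enriching the interface makes it easier to keep $W$ simultaneously $r$-regular and hamiltonian, but creates more exotic cycles to exclude, whereas a minimal interface keeps the cut analysis clean but is delicate to realise for every $r\ge 5$ at once --- in particular, producing an $r$-regular tube with a provably unique internal traversal, uniformly in $r$, is the crux. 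The remaining steps (the explicit hamiltonian cycle, the final inequality together with an explicit threshold on $n$, and the $n$-residue bookkeeping) I expect to be routine once the gadget is in place.
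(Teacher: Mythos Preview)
Your high-level architecture --- chain $m$ copies of a gadget through a two-port interface and count hamiltonian cycles multiplicatively --- is exactly the framework the paper uses (its Proposition~\ref{constructionProposition}). The divergence, and the genuine gap, is in the gadget itself.

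Your gadget $W$ relies on an $r$-regular ``rigid tube'': a piece with two vertices of degree $r-1$, all others of degree $r$, admitting a \emph{unique} hamiltonian path between the two ports. For odd $r\ge 5$ this object does not exist. Add the missing edge $uv$; the resulting graph is $r$-regular with all degrees odd, and by Thomason's parity theorem every edge lies in an even number of hamiltonian cycles, so $uv$ cannot lie in exactly one. Hence the number of hamiltonian $uv$-paths in your tube is even, never~$1$. More damagingly, even if you relax ``unique'' to ``bounded by a constant independent of the tube's length'', you would obtain $h(n,r)=O(1)$ for that $r$ by letting the tube dominate $W$ --- a conclusion far stronger than anything the paper proves and tantamount to resolving the central open problem discussed in its introduction. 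So the step you yourself flag as ``the crux'' is not a technicality to be filled in; for odd $r$ it is provably impossible, and in general it would be a major result on its own.

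The paper sidesteps rigidity entirely. Its gadget $H$ (Proposition~\ref{gadgetProposition}) has $3r+1$ vertices and is not rigid at all: the designated edge $e'$ lies in exactly $(2r-8)((r-4)!)^2((r-1)!)$ hamiltonian $uv$-paths, a number that is still exponential in $r$. The gain comes from squeezing more vertices per unit of branching than Haythorpe's $K_{r+1}$ blocks do --- concretely, the bipartite-like core forces the path to alternate between two independent classes, replacing an $(r-2)!$ by roughly $((r-4)!)^2$ over about three times as many vertices. A separate short computation then verifies $\bigl((2r-8)((r-4)!)^2((r-1)!)\bigr)^{1/(3r+1)}<((r-2)!)^{1/(r+1)}$ for all $r\ge 5$.

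Two further simplifications in the paper compared with your outline: the chaining is linear edge-replacement rather than a cyclic necklace, and the interface is a single edge, so each copy of $H$ is separated from the rest by a $2$-edge cut. A hamiltonian cycle must use both edges of every such cut, hence restricts to a hamiltonian $uv$-path in each copy; the ``forcing lemma'' is therefore a one-line observation and the counts are exact, with no $\mathrm{poly}(n)$ slack and no exotic cycles to exclude.
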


This conjecture was later disproved by Zamfirescu~\cite{Za22} for $r \in \{5,6,7\}$ and by Goedgebeur et al.~\cite{GJLSZ22} for all $r \geq 5$ by constructing graphs which show that $h(n,r) \leq 2((r-1)!)^{r-2}((r-2)!)^{\frac{n-r^2-r+4}{r+1}}$. For fixed $r$ (and $n$ regarded as a free parameter), these upper bounds are all asymptotically $O((\sqrt[r+1]{(r-2)!})^n)$, which is the best known asymptotic upper bound. This raises the natural question of whether this could be optimal. In this section, we will construct graphs that provide exponential improvements to this asymptotic upper bound (i.e. these graphs yield a smaller base for the exponent). Our result also shows that Haythorpe's conjecture does not only involve an incorrect constant factor, but is even asymptotically incorrect.

We first prove a proposition that allows us to count the number of (pairwise distinct) hamiltonian cycles in a graph that is obtained by combining two other graphs.
\begin{proposition}
\label{constructionProposition}
For any integer $r \geq 2$, let $G$ be an $r$-regular graph of order $n_1$ such that it has an edge $e$ that is contained in precisely $c_1$ hamiltonian cycles. Let $H$ be a graph of order $n_2$ that has 2 vertices $u$ and $v$ of degree $r-1$ and $n_2-2$ vertices of degree $r$. Let $c_2$ be the number of hamiltonian $uv$-paths in $H$ and let $e'$ be an edge of $H$ that is contained in precisely $c_3$ hamiltonian $uv$-paths. For every integer $k \geq 1$, there exists an $r$-regular graph $G_k$ on $n := n_1+kn_2$ vertices containing precisely $c_1c_2c_3^{k-1}$ hamiltonian cycles, which is asymptotically $O((\sqrt[n_2]{c_3})^n)$. 
\end{proposition}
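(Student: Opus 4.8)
The natural approach is an explicit construction: take the graph $G$ and one edge $e = xy$ that lies in exactly $c_1$ hamiltonian cycles, and "insert" $k$ copies of $H$ along that edge, each copy contributing a hamiltonian $uv$-path. Concretely, I would delete the edge $e$, take $k$ disjoint copies $H_1, \dots, H_k$ of $H$ with distinguished vertices $u_i, v_i$, and wire them up in a path-like fashion: join $x$ to $u_1$, join $v_i$ to $u_{i+1}$ for $i = 1, \dots, k-1$, and join $v_k$ to $y$. Since $u$ and $v$ had degree $r-1$ in $H$ and every other vertex had degree $r$, and $x, y$ each lose one incident edge (the deleted $e$) and gain one, the resulting graph $G_k$ is $r$-regular. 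Its order is $n_1 + k n_2$ as claimed. The only mild point to check here is that $G_k$ is simple — this holds because the new edges go between vertices in different blocks ($G$, $H_1, \dots, H_k$), which are vertex-disjoint, so no multi-edges or loops are created.

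**Counting the hamiltonian cycles.** The key structural claim is that every hamiltonian cycle of $G_k$ restricts, inside each copy $H_i$, to a hamiltonian $u_iv_i$-path, and restricts inside $G$ to a hamiltonian cycle of $G$ using the edge $e$ (equivalently, a hamiltonian $xy$-path of $G - e$). To see this, note that the only edges leaving the vertex set of $H_i$ are the (at most two) "connector" edges at $u_i$ and $v_i$; a hamiltonian cycle must enter and leave $V(H_i)$ using exactly these, hence it traverses $H_i$ as a single $u_iv_i$-path covering all of $V(H_i)$. Contracting each such path to a single edge recovers a hamiltonian cycle of $G$ through $e$. Conversely, any choice of a hamiltonian cycle of $G$ through $e$ together with, for each $i$, a hamiltonian $u_iv_i$-path of $H_i$ glues back to a hamiltonian cycle of $G_k$, and distinct choices give distinct cycles. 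So the count is (number of hamiltonian cycles of $G$ through $e$) $\times \prod_{i=1}^{k}$ (number of hamiltonian $u_iv_i$-paths of $H_i$) $= c_1 \cdot c_2^{\,k}$.

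**Getting the exponent down with $c_3$.** The proposition asks for $c_1 c_2 c_3^{k-1}$, not $c_1 c_2^k$, so the construction above is not quite the right one — we need to force all but one of the copies of $H$ to use the specific edge $e'$. I would modify the construction: in copies $H_2, \dots, H_k$ (say), rather than inserting a whole copy of $H$, I "subdivide-and-attach" along the edge $e'$ — that is, I use the edge $e'$ of $H_i$ as the connecting edge, removing $e'$ from $H_i$ and routing the cycle through $H_i$ so that a hamiltonian $u_iv_i$-path is forced to use (the position formerly occupied by) $e'$. More carefully: replace $e' = x'y'$ in $H_i$ by a path that exits to the neighboring block and comes back, so that any hamiltonian path of $G_k$ restricted to $H_i$ corresponds exactly to a hamiltonian $u_iv_i$-path of $H_i$ using $e'$; there are $c_3$ of those. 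Only $H_1$ is attached "loosely" via $u_1,v_1$ as before, contributing $c_2$. The counting argument then gives $c_1 \cdot c_2 \cdot c_3^{\,k-1}$. Finally, since $n = n_1 + k n_2$ gives $k = (n - n_1)/n_2$, we have $c_3^{k-1} = c_3^{-1-n_1/n_2} \cdot (c_3^{1/n_2})^{n}$, so $c_1 c_2 c_3^{k-1} = O\big((\sqrt[n_2]{c_3})^{n}\big)$ with the implied constant depending only on $G$ and $H$.

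**Main obstacle.** The routine parts (degree bookkeeping, simplicity, the asymptotic rewriting) are straightforward. The real care is in the two gluing lemmas — especially the second construction, where I must set up the attachment along $e'$ precisely enough that "hamiltonian path of $G_k$ through $H_i$" corresponds bijectively to "hamiltonian $u_iv_i$-path of $H_i$ using $e'$," with no spurious cycles that skip part of some $H_i$ or that enter and leave a block more than once. Pinning down the exact edge-insertion gadget so that $r$-regularity is preserved \emph{and} the edge $e'$ is forced, simultaneously, is the crux; once the gadget is fixed, the bijection and the product count follow by the same block-cut argument as above.
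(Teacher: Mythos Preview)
Your first step---inserting one copy of $H$ along the edge $e$ and counting $c_1 c_2$ hamiltonian cycles via the 2-edge-cut argument---is exactly right, and you correctly diagnose that chaining all $k$ copies along $e$ at their $u_i,v_i$ vertices gives $c_1 c_2^k$ rather than $c_1 c_2 c_3^{k-1}$. But your proposed fix is where the argument stalls: you try to design a new ``edge-insertion gadget'' that simultaneously attaches $H_i$ at $u_i,v_i$ \emph{and} forces the edge $e'$ inside $H_i$, and you yourself flag this as the unresolved crux. In fact no new gadget is needed, and the attempt to keep the linear $u_i v_i$-chain while also forcing $e'$ is what is leading you astray.

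The missing observation is that after one insertion you should simply \emph{iterate on a different edge}. In $G_1$ (your $G$ with one copy of $H$ inserted along $e$), the edge $e'$ of $H$ is now an edge of $G_1$, and by the same 2-edge-cut bijection it lies in exactly $c_1 c_3$ hamiltonian cycles of $G_1$ (a hamiltonian cycle of $G_1$ through $e'$ is a hamiltonian cycle of $G$ through $e$ together with a hamiltonian $uv$-path of $H$ through $e'$). So $(G_1, e')$ satisfies the same hypotheses as $(G,e)$ with $c_1$ replaced by $c_1 c_3$. Now insert the second copy of $H$ along $e'$---not along the chain between $G$ and $H_1$---to get $G_2$ with $(c_1 c_3)\cdot c_2$ hamiltonian cycles, and the new $e'$ (inside the second copy) lies in $(c_1 c_3)\cdot c_3$ of them. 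Repeating $k$ times gives $G_k$ with $c_1 c_2 c_3^{k-1}$ hamiltonian cycles. Regularity and simplicity are preserved at each step for the reasons you already gave, and your asymptotic rewriting is fine. This is precisely the paper's construction; the whole proof is a short induction once you see that the next copy of $H$ should be inserted at the previous copy's $e'$ rather than in series at $u,v$.
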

\begin{proof}
We set $G_0 := G$ and will consecutively construct graphs $G_1, G_2, \ldots, G_k$. We construct $G_1$ by taking the disjoint union of $G_0$ and $H$, removing the edge $e$ and connecting one of the previous endpoints of $e$ with $u$ and the other endpoint with $v$. Now $G_1$ is an $r$-regular graph on $n_1+n_2$ vertices, which has precisely $c_1c_2$ hamiltonian cycles and the edge $e'$ is contained in precisely $c_1c_3$ hamiltonian cycles. For each integer $i \in \{2, \ldots, k\}$, we apply the same construction to obtain $G_i$ using $G_{i-1}$, the edge $e'$ of $G_{i-1}$ and $H$. The graph $G_k$ has $n := n_1+kn_2$ vertices and it contains precisely $c_1c_2c_3^{k-1}$ hamiltonian cycles. We rewrite this expression to see that it is $O((\sqrt[n_2]{c_3})^n)$: $c_1c_2c_3^{k-1} = c_1c_2c_3^{\frac{n-(n_1+n_2)}{n_2}} = c_1(\sqrt[n_2]{c_3})^{-(n_1+n_2)}(\sqrt[n_2]{c_3})^n$.
\end{proof}

We now describe the construction of a gadget that will later be used as the choice for $H$ in Proposition~\ref{constructionProposition}.

\begin{proposition}
\label{gadgetProposition}
Let $r \geq 5$ be an integer and let $H$ be the graph shown in Fig.~\ref{fig:construction}. $H$ has order $3r+1$, 2 vertices $u$ and $v$ of degree $r-1$, $3r-1$ vertices of degree $r$ and an edge $e'$ contained in precisely $(2r-8)((r-4)!)^2((r-1)!)$ hamiltonian $uv$-paths.
\end{proposition}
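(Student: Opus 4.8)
The plan is to analyze the gadget $H$ from Figure~\ref{fig:construction} directly by decomposing each hamiltonian $uv$-path through $e'$ according to how it interacts with the symmetric ``blocks'' of the gadget. Since $H$ has order $3r+1$ with exactly two vertices of degree $r-1$ (namely $u$ and $v$) and the remaining $3r-1$ vertices of degree $r$, the construction almost certainly consists of three clique-like pieces (each roughly on $r$ vertices, plus or minus a vertex), joined in a path-like manner, with $u$ and $v$ at the two ends and $e'$ sitting in the middle piece. First I would read off the precise adjacency structure from the figure: identify which vertices form each near-clique, which vertices have a deficient degree inside their block (these are the ``ports'' through which a hamiltonian path must enter and leave a block), and locate $e'$. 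The target count $(2r-8)((r-4)!)^2((r-1)!)$ strongly suggests that the two outer blocks each contribute a factor of $(r-4)!$ (an ordering of $r-4$ interior vertices of a clique traversed as a segment), the middle block contributes $(r-1)!$, and the factor $2r-8 = 2(r-4)$ counts a binary choice times a choice among $r-4$ candidate vertices for where $e'$ attaches or which port is used.

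The key steps, in order, would be: (1) Establish that any hamiltonian $uv$-path in $H$ must cross the two ``cut-like'' structures separating the three blocks in a forced pattern --- i.e., it enters each block at a prescribed port vertex and leaves at another prescribed port vertex, because the low-degree/cut structure leaves no alternative. This is a case analysis on which edges of the separating structure the path uses, and I expect most cases to be killed immediately by a counting/parity argument (a hamiltonian path visits every vertex exactly once, so a block on $s$ vertices with exactly two ports must be traversed as a single segment from one port to the other). (2) Within each block, once the two endpoints of the traversing segment are fixed, count the number of hamiltonian paths of that block between those endpoints; for a clique $K_s$ this is $(s-2)!$, and the figure's blocks are presumably cliques with a few edges deleted, so I would verify the count is still a clean factorial (or adjust by the small correction the deleted edges cause). (3) Impose the extra constraint that the path uses the specific edge $e'$ in the middle block, which restricts the middle-block count from $(r-1)!$-type down appropriately, and track how the ``$2r-8$'' arises --- most plausibly as (orientation of the middle segment relative to $e'$) $\times$ (number of valid choices for one endpoint of the middle segment subject to using $e'$), or as a sum over the two ways the central cut can be crossed. (4) Multiply the independent block contributions together, since once the ports are fixed the three segment-completions are chosen independently, yielding the stated product.

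The main obstacle I anticipate is Step (1): rigorously ruling out ``irregular'' ways a hamiltonian $uv$-path could thread through the gadget --- for instance, entering a block, leaving it, and coming back, or using a chord of the separating structure in an unexpected way. With only degree-$r$ and degree-$(r-1)$ vertices around, there is enough room that naive arguments (``the path must go straight through'') may fail without a careful accounting of exactly which vertices have spare degree and how the figure's small number of deleted edges is positioned. I would handle this by a disciplined case split on the (few) edges in each separating structure, using in each case the fact that a hamiltonian path restricted to a vertex subset $S$ induces a disjoint union of paths covering $S$ whose endpoints must lie among the boundary vertices of $S$; counting boundary vertices and path-endpoints then forces the segment structure. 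Once the forced threading pattern is pinned down, the remaining counting (Steps 2--4) is routine clique-hamiltonian-path bookkeeping, and the only care needed is to not double-count configurations related by the gadget's internal symmetry and to correctly fold in the constraint imposed by $e'$.
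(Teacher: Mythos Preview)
Your high-level framework --- decompose via an edge-cut, count hamiltonian paths in each piece independently, and multiply --- is exactly how the paper proceeds. But your guess at the internal structure of $H$ is off in a way that matters, and the counting in one of the pieces requires an idea you have not anticipated.

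The gadget $H$ does not consist of three near-cliques in series. It has a single 2-edge cut $\{uu',\,z_1z_1'\}$ which separates off one pendant block isomorphic to $K_{r+1}^{-}$ (the clique on $r+1$ vertices with one edge removed); the factor $(r-1)!$ is simply the number of hamiltonian $u'z_1'$-paths in that block. With $e'=z_1z_2$ forced, every hamiltonian $uv$-path through $e'$ has the shape $u,u',\ldots,z_1',z_1,z_2,\ldots,v$, so what remains is to count hamiltonian $z_2v$-paths in the graph $I$ induced on the other $2r-2$ vertices. There are thus only two pieces, not three, and only one of them is clique-like.

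The piece $I$ is far from a near-clique and cannot be handled by ``$(s-2)!$ plus a small correction''. It contains an independent set $X=\{x_1,\ldots,x_{r-2}\}$ of size $r-2$, and since $|V(I)\setminus X|=r$ with the endpoints $z_2,v\in V(I)\setminus X$, any hamiltonian $z_2v$-path in $I$ is forced to \emph{alternate} between $X$ and its complement, except at exactly one place where two consecutive non-$X$ vertices occur. A short adjacency check shows this place must be occupied by $z_3,z_4$ (because $z_3$'s only $X$-neighbour is $x_1$ and $z_4$'s only $X$-neighbour is $x_{r-2}$), flanked by $x_1$ and $x_{r-2}$. Once the position and orientation of this four-vertex block are fixed, the remaining $r-4$ vertices of $X\setminus\{x_1,x_{r-2}\}$ and the $r-4$ vertices $y_1,\ldots,y_{r-4}$ can be permuted freely in their alternating slots; this is the true source of $((r-4)!)^2$, not two separate clique blocks. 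The factor $2r-8$ counts the admissible positions of the $z_3z_4$ block and arises as the sum $(r-3)+(r-5)$ over its two orientations (one orientation loses two positions to the non-adjacencies $z_2x_{r-2}$ and $vx_1$), rather than as a product $2\times(r-4)$.

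So the concrete gap is in your Steps~(2)--(3): near-clique bookkeeping will not handle $I$, and the mechanism that actually produces $(2r-8)((r-4)!)^2$ --- alternation forced by a large independent set, with a single forced ``defect'' whose location and orientation are what get counted --- is the idea you would still need to supply.
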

\begin{proof}
The set of vertices in $V(H)$ and its neighbors are summarized in Table~\ref{tab:neighborsOverview} to avoid any confusion. Note that the vertices $t_1, t_2, \ldots, t_{r-1}$ are not explicitly drawn in Fig.~\ref{fig:construction}. The dotted circle with the text $K_{r+1}^{-}$ on the bottom of this figure is the graph induced by the $r+1$ vertices $\{u', z_{1}', t_1, t_2, \ldots, t_{r-1}\}$ and it is obtained by removing the edge $u'z_{1}'$ from a clique containing these $r+1$ vertices. The edge $e'$ is defined as the edge $z_1z_2$.
\begin{figure}[h!t]
\centering
\includegraphics[width=.7\textwidth]{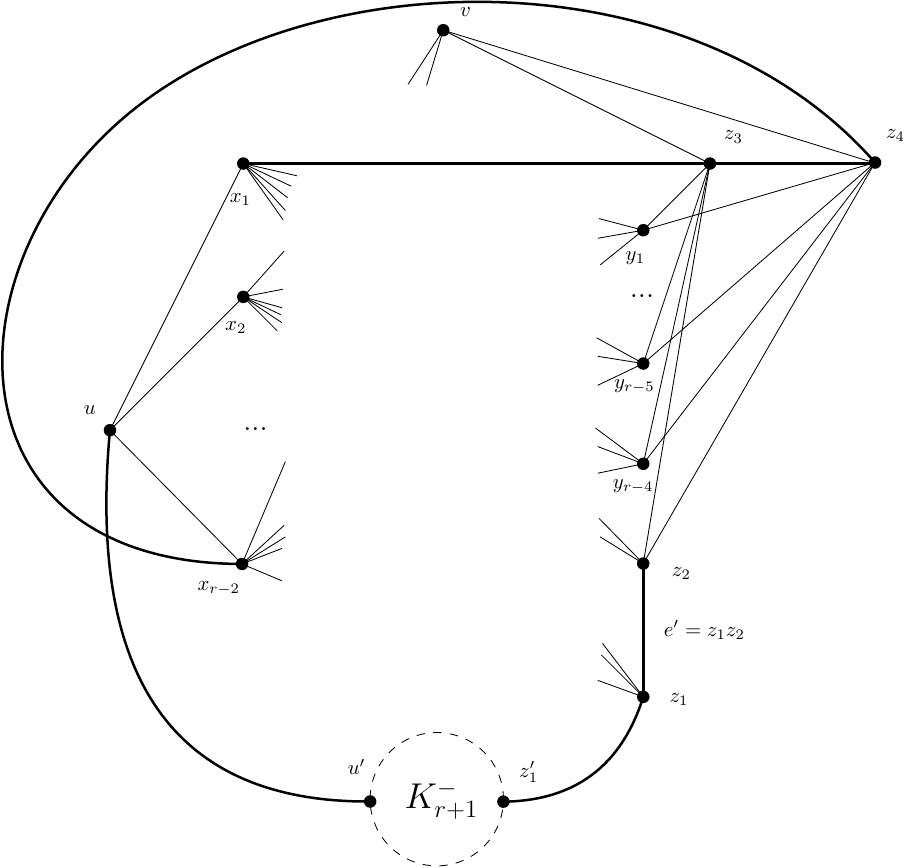}
\caption{The graph $H$ used in Proposition~\ref{gadgetProposition}. If an edge is shown in bold, it is contained in every hamiltonian $uv$-path that contains $e'$. For some edges, only a part of it is drawn to avoid having to draw many crossing edges. Three dots ($\ldots$) correspond to omitted vertices. Edges to omitted vertices are not drawn. The dashed lines on the bottom are not edges, but they represent the graph $K_{r+1}^{-}$, which is the clique $K_{r+1}$ on $r+1$ vertices in which the edge $u'z_{1}'$ is deleted.}
\label{fig:construction}
\end{figure}

\begin{table}[!ht]
  \centering
  \begin{tabular}{|c|c|c|}
    \hline
    \textbf{Vertex} & \textbf{Set of neighbors} & \textbf{Degree} \\
    \hline
    $u$ & $\{u', x_1, x_2, \ldots, x_{r-2}\}$ & $r-1$ \\
    $v$ & $\{z_3,z_4,x_2,x_3,\ldots,x_{r-2}\}$ & $r-1$ \\
    $z_1$ & $\{z_{1}',z_2,x_1, x_2, \ldots, x_{r-2}\}$ & $r$ \\
    $z_2$ & $\{z_1,z_3,z_4,x_1, x_2, \ldots, x_{r-3}\}$ & $r$ \\
    $z_3$ & $\{z_2,z_4,v,x_1, y_1, y_2, \ldots, y_{r-4}\}$ & $r$ \\
    $z_4$ & $\{z_2,z_3,v,x_{r-2}, y_1, y_2, \ldots, y_{r-4}\}$ & $r$ \\
    $u'$ & $\{u, t_1, t_2, \ldots, t_{r-1}\}$ & $r$ \\
    $z_{1}'$ & $\{z_1, t_1, t_2, \ldots, t_{r-1}\}$ & $r$ \\
    $x_{1}$ & $\{u, z_1, z_2, z_3, y_1, y_2, \ldots, y_{r-4}\}$ & $r$ \\
    $x_{i} (2 \leq i \leq r-3)$ & $\{u, z_1, z_2, v, y_1, y_2, \ldots, y_{r-4}\}$ & $r$ \\
    $x_{r-2}$ & $\{u, z_1, z_2, z_4, y_1, y_2, \ldots, y_{r-4}\}$ & $r$ \\
    $y_{i} (1 \leq i \leq r-4)$ & $\{z_3, z_4, x_1, x_2, \ldots, x_{r-2}\}$ & $r$ \\
    $t_{i} (1 \leq i \leq r-1)$ & $\{u', z_{1}', t_1, t_2, \ldots, t_{r-1}\} \setminus \{t_i\}$ & $r$ \\
    \hline
  \end{tabular}
  \caption{An overview of the neighbors of each vertex in $V(H)$ from Fig.~\ref{fig:construction}.}
  \label{tab:neighborsOverview}
\end{table}



Note that the edge set $\{uu', z_1z_{1}'\}$ is a cutset and therefore the number of hamiltonian $uv$-paths in $H$ containing $e'$ is equal to the product of the number of hamiltonian $z_2v$-paths in the graph $I$ induced by the vertices $\{z_2, z_3, z_4, v, x_1, x_2, \ldots, x_{r-2}, y_1, y_2, \ldots, y_{r-4}\}$ and the number of hamiltonian $u'z_1'$-paths in $K_{r+1}^{-}$. The second factor is equal to $(r-1)!$, because the vertices $t_1, t_2, \ldots, t_{r-1}$ can be permuted arbitrarily.

For the first factor, we are interested in determining the number of ordered sequences $(a_1=z_2,a_2,\ldots,a_{2r-2}=v)$ of $2r-2$ elements that correspond to hamiltonian $z_2v$-paths in $I$ (i.e. there are $2r-4 = 2(r-2)$ unknowns). Define $X := \{x_1, x_2, \ldots, x_{r-2}\}$. Note that $X$ is an independent set of cardinality $r-2$. Therefore, a vertex in $X$ cannot be preceded nor succeeded by another element from $X$ in the path. Hence, there are in principle only three cases for how the hamiltonian path can look like. For the first (second) case, we have alternatingly $a_2 \in X$ ($a_2 \in V(I) \setminus X$), $a_3 \in V(I) \setminus X$ ($a_3 \in X$), $\ldots$, $a_{2r-3} \in V(I) \setminus X$ ($a_{2r-3} \in X$). However, $z_3$ has $x_1$ as its only neighbor from $X$ and $z_4$ has $x_{r-2}$ as its only neighbor from $X$. This means that there are no hamiltonian $z_2v$-paths in $I$ that correspond to these two cases. For the third case, there exists an even integer $2 \leq i \leq 2r-6$ such that the following three conditions are met:
\begin{enumerate}
\item We have alternatingly $a_2 \in X$, $a_3 \in V(I) \setminus X$, $\ldots$, $a_{i-1} \in V(I) \setminus X$.
\item We have either a) $a_i=x_1, a_{i+1}=z_3, a_{i+2}=z_4$ and $a_{i+3}=x_{r-2}$ or\\ b) $a_i=x_{r-2}, a_{i+1}=z_4, a_{i+2}=z_3$ and $a_{i+3}=x_1$.
\item We have alternatingly $a_{i+4} \in V(I) \setminus X$, $a_{i+5} \in X$, $\ldots$, $a_{2r-3} \in X$.
\end{enumerate}
For case ``a'' of the second condition, there are $r-3$ valid indices $i$. For every fixed $i$, the remaining $r-4$ elements from $X \setminus \{x_1, x_{r-2}\}$ and the remaining $r-4$ elements from $(V(I) \setminus X) \setminus \{z_3,z_4\}$ can each be permuted arbitrarily. Hence, there are $(r-3)((r-4)!)^2$ hamiltonian $z_2v$-paths in $I$ satisfying case ``a'' of the second condition. For case ``b'' of the second condition, $i$ cannot be equal to 2 (because $x_{r-2}$ is not a neighbor of $z_2$) and $i$ cannot be equal to $2r-6$ (because $x_1$ is not a neighbor of $v$). Hence, there are $r-5$ valid indices $i$ and, again, the remaining $r-4$ elements from $X \setminus \{x_1, x_{r-2}\}$ and the remaining $r-4$ elements from $(V(I) \setminus X) \setminus \{z_3,z_4\}$ can each be permuted arbitrarily, which leads to $(r-5)((r-4)!)^2$ hamiltonian $z_2v$-paths in $I$ satisfying case ``b'' of the second condition. We conclude that there are $(2r-8)((r-4)!)^2$ hamiltonian $z_2v$-paths in $I$ and thus there are precisely $(2r-8)((r-4)!)^2((r-1)!)$ hamiltonian $uv$-paths in $H$ containing $e'$.
\end{proof}

We now obtain the following asymptotic upper bound:

\begin{theorem}
For every integer $r \geq 5$, $h(n,r)$ is asymptotically $$O((\sqrt[3r+1]{(2r-8)((r-4)!)^2((r-1)!)})^n).$$
\end{theorem}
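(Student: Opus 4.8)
The plan is to combine Propositions~\ref{constructionProposition} and~\ref{gadgetProposition}: take $H$ to be the gadget of Proposition~\ref{gadgetProposition}, so that $n_2 = 3r+1$ and $c_3 = (2r-8)((r-4)!)^2((r-1)!)$, and feed it into Proposition~\ref{constructionProposition} together with a suitable $r$-regular base graph $G$. For any choice of $G$ having an edge $e$ that lies in $c_1 \ge 1$ (necessarily finitely many) hamiltonian cycles, Proposition~\ref{constructionProposition} produces, for every $k \ge 1$, an $r$-regular hamiltonian graph on $n_1 + k(3r+1)$ vertices with exactly $c_1 c_2 c_3^{\,k-1}$ hamiltonian cycles, where $n_1 = |V(G)|$ and $c_2$ is the number of hamiltonian $uv$-paths in $H$, a constant depending only on $r$ which is positive since $c_2 \ge c_3 > 0$ for $r \ge 5$. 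Reading off the rewriting at the end of the proof of Proposition~\ref{constructionProposition}, this count equals $c_1 c_2 (\sqrt[3r+1]{c_3})^{-(n_1+3r+1)} (\sqrt[3r+1]{c_3})^n \le c_1 c_2 (\sqrt[3r+1]{c_3})^n$ (using $c_3 \ge 1$), so $h(n,r) \le c_1 c_2 (\sqrt[3r+1]{c_3})^n$ for every $n$ of this form.

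The one point that still needs care is that this only bounds $h(n,r)$ for $n$ in a single residue class modulo $3r+1$, whereas the theorem asserts the bound for all large $n$. I would fix this by letting $G$ range over a \emph{finite} family $\mathcal B$ of small base graphs whose orders realise every residue class modulo $3r+1$ that a feasible value of $n$ can have. First, if $nr$ is odd then $h(n,r) = h(n+1,r)$ by definition, so it suffices to treat $n$ with $nr$ even. For $r$ even, the circulant $C_m(1,2,\dots,r/2)$ is $r$-regular and hamiltonian for every $m \ge r+1$, so one can pick, for each residue $j$ modulo $3r+1$, a graph in $\mathcal B$ of order congruent to $j$ and at most $4r+1$; for $r$ odd (so $3r+1$ is even and every feasible $n$ is even), the graph $C_m(1,2,\dots,(r-1)/2)$ together with the perfect matching joining antipodal vertices is $r$-regular and hamiltonian for every even $m \ge r+1$, which lets one realise every \emph{even} residue modulo $3r+1$ by a base graph of even order at most $4r+1$. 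In each base graph, take $e$ to be an edge of the obvious hamiltonian cycle, so $c_1 \ge 1$; since $\mathcal B$ is finite, $c_1 \le C(r)$ for some constant $C(r)$.

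Now for any sufficiently large $n$ with $nr$ even, choose $G \in \mathcal B$ with $n_1 := |V(G)| \equiv n \pmod{3r+1}$; then $k := (n-n_1)/(3r+1)$ is a positive integer, and Proposition~\ref{constructionProposition} gives an $r$-regular hamiltonian graph on $n$ vertices with exactly $c_1 c_2 c_3^{\,k-1}$ hamiltonian cycles, whence $h(n,r) \le c_1 c_2 c_3^{\,k-1} \le C(r)\, c_2\, (\sqrt[3r+1]{c_3})^n$, the constant $C(r) c_2$ being independent of $n$. This yields $h(n,r) = O\!\big((\sqrt[3r+1]{(2r-8)((r-4)!)^2((r-1)!)})^n\big)$. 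The mathematical substance sits entirely in Propositions~\ref{constructionProposition} and~\ref{gadgetProposition}; the only obstacle at this stage is the bookkeeping guaranteeing a valid base graph in every relevant residue class, in particular noticing that for odd $r$ the modulus $3r+1$ is even, so that exactly the even residues — which are precisely the feasible ones — must be covered.
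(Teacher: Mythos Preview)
Your proof is correct and follows essentially the same approach as the paper: combine Proposition~\ref{gadgetProposition} (supplying $H$ with $n_2=3r+1$ and $c_3=(2r-8)((r-4)!)^2((r-1)!)$) with Proposition~\ref{constructionProposition}. The paper simply takes $G=K_{r+1}$ and invokes the asymptotic statement of Proposition~\ref{constructionProposition} without worrying about residue classes; your additional bookkeeping with the circulant base graphs to cover every feasible residue modulo $3r+1$ is more careful than the paper itself, but not a different idea.
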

\begin{proof}
We use the graph described in Proposition~\ref{gadgetProposition} as our choice for the graph $H$ that is required in Proposition~\ref{constructionProposition}. We have $n_2=3r+1$ and $c_3=\sqrt[3r+1]{(2r-8)((r-4)!)^2((r-1)!)}$. Note that the asymptotic upper bound that Proposition~\ref{constructionProposition} gives us, does not depend on the choice of $G$, so any $r$-regular graph (e.g. the clique $K_{r+1}$) can be chosen for $G$.
\end{proof}

Table~\ref{tab:comparisonUpperBounds} compares these new asymptotic upper bounds with the previous best ones and shows that they are indeed better for $r \in \{5,6,7,8\}$. 

\begin{table}[!ht]
  \centering
  \renewcommand{\arraystretch}{1.05}
  \begin{tabular}{|c|c|c|}
    \hline
    $\mathbf{r}$ & \textbf{Previous best asymptotic upper bound} & \textbf{New asymptotic upper bound} \\
    \hline
    $5$ & $O((\sqrt[6]{6})^n) \approx O(1.348^n)$ & $O((\sqrt[16]{48})^n) \approx O(1.274^n)$ \\
    $6$ & $O((\sqrt[7]{24})^n) \approx O(1.575^n)$ & $O((\sqrt[19]{1920})^n) \approx O(1.489^n)$ \\
    $7$ & $O((\sqrt[8]{120})^n) \approx O(1.819^n)$ & $O((\sqrt[22]{155,520})^n) \approx O(1.722^n)$ \\
    $8$ & $O((\sqrt[9]{720})^n) \approx O(2.077^n)$ & $O((\sqrt[25]{23,224,320})^n) \approx O(1.971^n)$ \\
    \hline
  \end{tabular}
  \caption{A comparison between the previous best asymptotic upper bound and the new one for $h(n,5), h(n,6), h(n,7)$ and $h(n,8)$.}
  \label{tab:comparisonUpperBounds}
\end{table}

The following proposition shows that the upper bounds are in fact better for all $r \geq 5$ by proving that the new base of the exponent is smaller than the previous best base. 

\begin{proposition}
For every integer $r \geq 5$, we have $\sqrt[3r+1]{(2r-8)((r-4)!)^2((r-1)!)} < \sqrt[r+1]{(r-2)!}$.
\end{proposition}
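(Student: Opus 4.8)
The plan is to clear the roots and reduce the claim to an elementary inequality between factorials and powers. Since both sides are positive, raising to the power $(3r+1)(r+1)$ shows that the statement is equivalent to
\[
\bigl((2r-8)\,((r-4)!)^2\,(r-1)!\bigr)^{r+1} < \bigl((r-2)!\bigr)^{3r+1}.
\]
I would rewrite the left-hand side using $2r-8 = 2(r-4)$ together with the identities $(r-4)! = (r-2)!/((r-2)(r-3))$ and $(r-1)! = (r-1)\,(r-2)!$ (both valid for $r \geq 5$, the only range of interest). Cancelling the common factor $((r-2)!)^{3r+1}$ then turns the goal into
\[
2^{r+1}\,(r-4)^{r+1}\,(r-1)^{r+1}\,\bigl((r-2)!\bigr)^{2} < (r-2)^{2r+2}\,(r-3)^{2r+2}.
\]
The whole point of this reduction is that $(r-2)!$ now appears only to the second power, so a crude Stirling-type bound on it will be more than enough.

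Next I would take logarithms, so that it suffices to show $L(r) < 0$, where $L(r)$ is the difference of the logarithms of the two sides of the last display. Bounding $\ln((r-2)!)$ above by the standard estimate $n! \leq e\sqrt{n}\,(n/e)^n$ gives $2\ln((r-2)!) \leq (2r-3)\ln(r-2) - 2r + 6$, and the three $\ln(r-2)$-contributions then collapse to the single term $-5\ln(r-2)$. The surviving logarithmic terms regroup as $(r+1)\ln\!\bigl(\tfrac{(r-4)(r-1)}{(r-3)^2}\bigr)$ because $2r+2 = 2(r+1)$; using $\tfrac{(r-4)(r-1)}{(r-3)^2} = 1 + \tfrac{r-5}{(r-3)^2}$ and $\ln(1+x) \leq x$, this is at most $\tfrac{(r+1)(r-5)}{(r-3)^2}$, which is less than $2$ since $(r+1)(r-5) < 2(r-3)^2$ rearranges to $(r-4)^2 + 7 > 0$. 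Putting the pieces together gives $L(r) < \phi(r)$ with $\phi(r) = (r+1)\ln 2 - 2r + 8 - 5\ln(r-2)$; a one-line derivative check shows $\phi'(r) = \ln 2 - 2 - \tfrac{5}{r-2} < 0$ for $r > 2$, so $\phi$ is decreasing, and therefore $\phi(r) \leq \phi(5) = \ln\tfrac{64}{243} - 2 < 0$ for all $r \geq 5$.

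The difficulty here is conceptual rather than computational: the two bases are asymptotically equal to leading order — both are asymptotic to $r/e$ by Stirling's formula, and they differ only by a bounded amount — so the inequality genuinely concerns second-order behaviour. A naive bound such as $(r-2)! \leq (r-2)^{r-2}$ would only produce an upper estimate for $L(r)$ that itself tends to $+\infty$, proving nothing. The algebraic reduction above is precisely what defuses this: once $(r-2)!$ has been isolated with a bounded exponent, the decisive comparison is between $(r+1)\ln 2$ and $2r$, which has slack linear in $r$, and all the remaining estimates are routine and uniform over $r \geq 5$.
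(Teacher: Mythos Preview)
Your proof is correct and follows essentially the same strategy as the paper: clear the radicals, cancel the common power of $(r-2)!$ so that only a bounded power of a factorial survives, and then finish with a Stirling-type upper bound. Your intermediate inequality $2^{r+1}(r-4)^{r+1}(r-1)^{r+1}((r-2)!)^2 < (r-2)^{2r+2}(r-3)^{2r+2}$ is exactly the paper's inequality $((2r-8)(r-1))^{r+1}((r-3)!)^2 < ((r-3)^2)^{r+1}(r-2)^{2r}$ after one more substitution $(r-2)! = (r-2)(r-3)!$.

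The one noteworthy difference is in the endgame. The paper applies Stirling and then splits the remaining estimate into three separate inequalities, one of which ($2\pi(r-3)e^{8-r}<1$) only holds from $r=13$ onward, so the cases $5\le r\le 12$ are checked by computer. Your logarithmic rearrangement --- isolating $(r+1)\ln\!\bigl(\tfrac{(r-4)(r-1)}{(r-3)^2}\bigr)$ and bounding it by $2$ via $(r-4)^2+7>0$ --- is tighter and yields the decreasing majorant $\phi(r)=(r+1)\ln 2 - 2r + 8 - 5\ln(r-2)$, which is already negative at $r=5$. So you obtain a fully self-contained proof valid for all $r\ge 5$ without any machine verification; the paper's version trades a small amount of cleverness in the final estimate for a finite computer check.
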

\begin{proof}
We first bring the inequality to a more convenient form:
\begin{align*}
\sqrt[3r+1]{(2r-8)((r-4)!)^2((r-1)!)} &< \sqrt[r+1]{(r-2)!} &\Leftrightarrow \\
(2r-8)^{r+1}((r-4)!)^{2r+2}(r-1)^{r+1}((r-2)!)^{r+1} &< ((r-2)!)^{3r+1} &\Leftrightarrow \\
(2r-8)^{r+1}((r-4)!)^{2r+2}(r-1)^{r+1} &< (r-2)^{2r}(r-3)^{2r}((r-4)!)^{2r} &\Leftrightarrow \\
(2r-8)^{r+1}((r-3)!)^{2}(r-1)^{r+1} &< (r-2)^{2r}(r-3)^{2r+2} &\Leftrightarrow \\
((2r-8)(r-1))^{r+1}((r-3)!)^{2} &< ((r-3)^{2})^{r+1}(r-2)^{2r}
\end{align*}
We now bound the left-hand side by using Stirling's formula:
\begin{align*}
((2r-8)(r-1))^{r+1}((r-3)!)^{2} &< e^{r+1}\Bigg(\frac{(2r-8)(r-1)}{e}\Bigg)^{r+1}\Bigg(\sqrt{2\pi(r-3)e}\Bigg(\frac{r-3}{e}\Bigg)^{r-3}\Bigg)^2\\
&= \Bigg(\frac{(2r-8)(r-1)}{e}\Bigg)^{r+1} (2\pi(r-3)e^{8-r})(r-3)^{2r-6}
\end{align*}
We proved the inequality:
$$ \Bigg(\frac{(2r-8)(r-1)}{e}\Bigg)^{r+1} (2\pi(r-3)e^{8-r})(r-3)^{2r-6} < ((r-3)^{2})^{r+1}(r-2)^{2r}$$
for $5 \leq r \leq 12$ with a computer and for $r \geq 13$ note that the quadratic polynomial $\frac{(2r-8)(r-1)}{e}$ is smaller than $(r-3)^{2}$, that $2\pi(r-3)e^{8-r}<1$ and that  $(r-3)^{2r-6}<(r-2)^{2r}$.
\end{proof}

\section{Constant asymptotic upper bounds for $h_2(n,r,g)$ for all integers $r,g \geq 3$} \label{sec:nonham}

We saw before that the minimum number of longest cycles in hamiltonian 3-regular and 4-regular graphs are bounded by a constant, whereas the best known asymptotic upper bounds for hamiltonian $r$-regular graphs ($r \geq 5$) are exponential. It turns out that the situation is similar for $r \in \{3,4\}$, but quite different for $r \geq 5$ when one focuses on the minimum number of longest cycles in non-hamiltonian $r$-regular graphs instead. Chia and Thomassen~\cite{CT12} showed that there are infinitely many $n$ such that there is a 3-regular graph on $n$ vertices containing a unique longest cycle. Zamfirescu~\cite{Za22} complemented this result by showing for each integer $r \geq 3$ the existence of infinitely many non-hamiltonian $r$-regular graphs containing a unique longest cycle. For the 3-regular case, the graphs described by Chia and Thomassen and Zamfirescu respectively have vertex connectivity 2 and 3, whereas for the $r$-regular case ($r \geq 4$) they have connectivity 1. Hence, the 2-connected case remains largely open. Another natural condition in this context is to consider the girths of the graphs. For example, Zamfirescu~\cite{Za22} asked whether there exist 3-regular 2-connected triangle-free non-hamiltonian graphs with a unique longest cycle (in other words: is $h_2(n,3,g)$ equal to 1 for some $n, g \geq 4$), whereas Cantoni conjectured that there are no (2-connected) triangle-free planar 3-regular graphs with exactly three hamiltonian cycles (see~\cite{Tu76}). In this section we will prove that for all integers $r,g \geq 3$, there exist constants $n_{r,g}$ and $c_{r,g}$ depending only on $r$ and $g$ such that for all integers $n \geq n_{r,g}$ for which $nr$ is even, the minimum number of longest cycles in non-hamiltonian 2-connected $r$-regular graphs with girth $g$ is bounded by $c_{r,g}$ (i.e. $h_2(n,r,g)$ is $O(1)$). Motivated by Zamfirescu's question, we also give an example of a 3-regular 2-connected triangle-free non-hamiltonian graph containing precisely four pairwise distinct longest cycles.

We first require a classical theorem by Sachs.

\begin{theorem}[Theorem 1 from~\cite{Sa63}]
\label{SachsTheorem}
For all integers $r, g \geq 3$, there exists a hamiltonian $r$-regular graph with girth $g$.
\end{theorem}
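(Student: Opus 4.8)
The plan is to give a direct construction, building the hamiltonian cycle first and then adding a regular "chord" structure on top of it, so that hamiltonicity is automatic and the only real work lies in controlling the girth. Concretely, I would place the $n$ vertices (with $n$ large, and $n$ even whenever $r$ is odd so that $nr$ is even) around a cycle $v_0 v_1 \cdots v_{n-1} v_0$. This cycle is the intended hamiltonian cycle and already contributes degree $2$ to every vertex, so it remains only to superimpose an $(r-2)$-regular graph on the same vertex set without introducing multiple edges. To pin the girth down to \emph{exactly} $g$ I would argue the two inequalities separately: the upper bound, girth $\le g$, by exhibiting one cycle of length exactly $g$, and the lower bound, girth $\ge g$, by forbidding every cycle of length at most $g-1$.

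For $r \ge 4$ I would realize this through circulant graphs. Working in $\mathbb{Z}_n$, I connect $i$ to $i \pm s$ for each $s$ in a step set $S$ with $1, g-1 \in S$. Step $1$ yields the hamiltonian cycle, and the steps $1$ and $g-1$ together produce the cycle $0,1,\ldots,g-1,0$ of length exactly $g$ (a short computation shows that steps $1$ and $g-1$ alone create no shorter closed walk), so the girth is at most $g$. The degree bookkeeping works out because $r \ge 4$ leaves room for both $1$ and $g-1$: for even $r$ I take $|S| = r/2$ distinct steps in $\{1,\ldots,\lfloor n/2\rfloor\}$, and for odd $r$ (so $n$ even) I take $(r-1)/2$ such steps together with the extra step $n/2$, which contributes a perfect matching of degree $1$. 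A closed walk of length $\ell$ in the circulant corresponds to a relation $\sum_{m=1}^{\ell}\epsilon_m s_{j_m}\equiv 0 \pmod n$ with $\epsilon_m\in\{+1,-1\}$, so girth $\ge g$ amounts to excluding such relations with $\ell\le g-1$ that trace a simple cycle. I would choose the steps greedily: when adding a new step $s^{*}$, any newly created short cycle must use $s^{*}$, giving a congruence $q\,s^{*}\equiv \beta \pmod n$ with $q\in\{\pm 1,\ldots,\pm(g-1)\}$ and $\beta$ a signed sum of at most $g-1$ earlier steps. The number of admissible $\beta$ is at most $(2|S|)^{g-1}$ and each congruence has at most $g-1$ solutions, so the number of forbidden values of $s^{*}$ is bounded by a constant depending only on $r$ and $g$; since this is $o(n)$, a valid step always exists once $n$ is large, and the resulting circulant is an $r$-regular hamiltonian graph of girth exactly $g$.

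The cubic case $r=3$ must be handled separately, since cubic circulants are too rigid (they are essentially Möbius ladders and cannot simultaneously carry the step $g-1$ and stay $3$-regular). Here I would instead take the hamiltonian cycle plus a perfect matching (so $n$ even). Matching $v_0$ to $v_{g-1}$ creates the $g$-cycle formed by the arc $v_0 v_1 \cdots v_{g-1}$ together with this one chord, giving girth $\le g$; I would then extend this to a full perfect matching using only chords of cyclic length at least $g-1$, adding them one at a time while rejecting the placements that would close a cycle of length at most $g-1$. As in the circulant argument, each new matching chord has only $O_{r,g}(1)$ forbidden positions, so the matching can be completed for all large $n$.

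I expect the genuine obstacle to be the lower bound girth $\ge g$, that is, simultaneously achieving exact $(r-2)$-regularity of the added structure and excluding \emph{all} short cycles; the essential point is that the number of short-cycle obstructions is bounded independently of $n$, which is what makes the greedy selection succeed. The cubic case is the tightest instance of this difficulty, since the added structure is only a matching and therefore offers the least freedom, and it is precisely this regime that forms the heart of Sachs-type constructions.
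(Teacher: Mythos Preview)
The paper does not prove this statement; it is quoted as a classical theorem of Sachs and used as a black box. So there is no proof in the paper to compare against, and your attempt has to stand on its own.

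Your plan is natural, and the even-$r$ circulant argument is essentially sound. However, there are two genuine gaps.

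\textbf{Odd $r \ge 5$ with $g \ge 5$.} In a circulant of odd degree you are forced to include the self-inverse step $n/2$, and you also include at least one ordinary step $s$ (indeed you insist on $s=1$). But any ordinary step $s$ together with $n/2$ already produces the $4$-cycle
\[
0,\; s,\; s+\tfrac{n}{2},\; \tfrac{n}{2},\; 0,
\]
using two $\pm s$ edges and two $n/2$ edges. Hence every odd-degree circulant on $\mathbb{Z}_n$ with $n$ even has girth at most $4$, so your greedy step selection cannot reach girth $g\ge 5$ in this regime. The argument needs a different device for the ``last'' degree (e.g.\ a carefully chosen non-circulant perfect matching on top of an even-degree circulant), and that device must itself be shown not to create short cycles.

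\textbf{Cubic case.} Your matching argument is only a sketch, and as stated it does not close. You correctly observe that adding a chord $uv$ can be blocked only by the $O_g(1)$ vertices within graph-distance $g-2$ of $u$. But a perfect matching is a global object: the greedy process must also pick $v$ among the \emph{unmatched} vertices, and that pool shrinks to a bounded number at the end, at which point ``there are only $O_g(1)$ forbidden choices'' no longer guarantees a legal choice exists. In particular, one can easily be left with two unmatched vertices at cyclic distance $<g-1$. To make this work you need a structured matching (or a genuine existence argument such as an application of Hall's theorem with a controlled set of forbidden pairs), not a bare greedy count. This endgame is exactly where Sachs-type constructions require real care, and your write-up does not supply it.
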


We now obtain the following corollary:

\begin{corollary}
\label{edgeRemovalCorollary}
For all integers $r, g \geq 3$, there exists a graph $G'$ which has two distinct vertices $u$ and $v$ of degree $r-1$ and all other vertices of degree $r$ such that 1) the distance between $u$ and $v$ is at least $g-1$, 2) $G'$ has a hamiltonian $uv$-path and 3) $G'$ has girth $g$.
\end{corollary}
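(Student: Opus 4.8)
The plan is to obtain $G'$ from the graph supplied by Theorem~\ref{SachsTheorem} by deleting a single, carefully chosen edge of a hamiltonian cycle. First I would invoke Theorem~\ref{SachsTheorem} to get a hamiltonian $r$-regular graph $G$ of girth $g$, and fix a hamiltonian cycle $C$ of $G$ together with a cycle $D$ of length $g$; write $n=|V(G)|$.

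The next step is to argue that $n>g$, so that $C$ has an edge outside $D$. Since $r\ge 3$, the cycle $C$ has a chord, and any chord together with the shorter of the two arcs of $C$ joining its endpoints is a cycle of length at most $\lfloor n/2\rfloor+1$; as $G$ has girth $g$ this yields $g\le \lfloor n/2\rfloor+1$, hence $n\ge 2g-2>g$ because $g\ge 3$. Therefore $|E(C)|=n>g=|E(D)|\ge |E(C)\cap E(D)|$, and I may pick an edge $e=uv\in E(C)\setminus E(D)$ and set $G':=G-e$.

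It then remains to check the three properties. Deleting $e$ drops the degrees of $u$ and $v$ to $r-1$ and keeps all other degrees equal to $r$, and $u\ne v$ since $e$ is an edge of the simple graph $G$. Property~(2) is immediate, as $C-e$ is a hamiltonian $uv$-path of $G'$. For property~(3): every cycle of $G'$ is a cycle of $G$, so $G'$ has girth at least $g$, while $D$ is still a cycle of $G'$ because $e\notin E(D)$, so the girth of $G'$ equals $g$. For property~(1): any $uv$-path $P$ in $G'$ avoids $e$, so $P$ together with $e$ is a cycle of $G$, whence $|E(P)|+1\ge g$; thus every $uv$-path of $G'$ has at least $g-1$ edges, i.e.\ the distance between $u$ and $v$ in $G'$ is at least $g-1$.

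I do not anticipate a real obstacle, since this is essentially a one-edge-deletion argument applied to Sachs' graph. The only delicate point is making sure the deletion does not raise the girth above $g$; this is handled precisely by choosing $e$ outside a fixed shortest cycle $D$, which in turn is why the inequality $n>g$ is established first.
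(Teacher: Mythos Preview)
Your proposal is correct and follows essentially the same approach as the paper: delete from Sachs' graph an edge of a hamiltonian cycle that avoids a fixed shortest cycle, then verify the three properties exactly as you do. The only difference is that you supply an explicit chord argument for $n>g$, whereas the paper simply asserts $g<|V(G)|$.
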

\begin{proof}
Let $G$ be a hamiltonian $r$-regular graph with girth $g$ (note that $G$ always exists because of Theorem~\ref{SachsTheorem}). Let $c_1$ be a cycle of $G$ containing precisely $g$ edges and let $c_2$ be a hamiltonian cycle of $G$. Let $uv$ be an edge from the edge set $E(c_2) \setminus E(c_1)$ (note that this set is not empty, because $g < |V(G)|$). Construct the graph $G'$ by removing the edge $uv$ from $G$. In the graph $G'$, the distance between $u$ and $v$ is at least $g-1$ (because $G$ has girth $g$), $G'$ has a hamiltonian $uv$-path (with edge set $E(c_2) \setminus \{uv\}$) and $G'$ has girth $g$ (the cycle $c_1$ has length $g$).
\end{proof}

Based on this, we now prove the existence of two $r$-regular graphs with girth $g$ such that the difference between their orders is the smallest possible positive integer.
 
\begin{theorem}
\label{consecutiveTheorem}
For all integers $r, g \geq 3$, there exist integers $n_1$ and $n_2$ such that $G_i$ is a hamiltonian $r$-regular graph with girth $g$ on $n_i$ vertices ($i \in \{1,2\}$) and $n_2=n_1+1$ if $r$ is even and $n_2=n_1+2$ if $r$ is odd.
\end{theorem}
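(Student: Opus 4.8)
The plan is to assemble both graphs from a single flexible building block, namely the graph $G'$ furnished by Corollary~\ref{edgeRemovalCorollary}: it already carries exactly the interface demanded by Proposition~\ref{constructionProposition} (two vertices $u,v$ of degree $r-1$, all others of degree $r$, and a hamiltonian $uv$-path), and it additionally satisfies $\mathrm{dist}_{G'}(u,v)\ge g-1$ and has girth $g$, which I will lean on. First I would fix a hamiltonian $r$-regular graph $G_0$ of girth $g$ with a distinguished edge on a hamiltonian cycle (such a graph exists by Theorem~\ref{SachsTheorem}), and iterate the construction of Proposition~\ref{constructionProposition} with $H:=G'$ (the relevant path-counts are all positive, so each iterate is hamiltonian) to obtain, for every $k\ge1$, a hamiltonian $r$-regular graph $\Gamma_k$ of order $|V(G_0)|+k|V(G')|$. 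The short point that needs checking here — and the reason the distance clause of the corollary is stated — is that each splice keeps the girth equal to $g$: a cycle that is genuinely new after a splice must cross the $2$-edge-cut introduced there, hence contains a $u$–$v$ subpath inside a copy of $G'$ (length $\ge g-1$) together with a subpath in the rest of the graph joining the two ends of the deleted edge (also length $\ge g-1$, since that edge lay on no shorter cycle), so its length is $\ge 2g$. I would also record that $\Gamma_k$ contains $k$ vertex-disjoint copies of $G'$ strung together in a path-like necklace, so that any path between a vertex of the $i$-th copy and a vertex of the $j$-th copy has length at least $|i-j|$; in particular the diameter of $\Gamma_k$ grows without bound. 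Now fix $k$ huge, write $\Gamma:=\Gamma_k$, $n_1:=|V(\Gamma)|$, and let $C$ be a hamiltonian cycle of $\Gamma$.

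For $r$ even I would take $G_1:=\Gamma$ and build $G_2$ by a one-vertex surgery. Choose a matching $M=\{a_1b_1,\dots,a_{r/2}b_{r/2}\}$ of $\Gamma$ with $a_1b_1\in E(C)$, with $a_jb_j\notin E(C)$ for $j\ge2$, and — this is where largeness of $k$ enters — with the $r/2$ edges lying in $r/2$ copies of $G'$ that are pairwise far apart in the necklace, so that the $r$ endpoints are at pairwise distance $\ge g$ in $\Gamma$ and $M$ avoids some fixed $g$-cycle. Let $G_2$ be $\Gamma$ with $M$ deleted and a new vertex $w$ joined to $a_1,b_1,\dots,a_{r/2},b_{r/2}$; then $G_2$ is $r$-regular on $n_1+1$ vertices. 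It is hamiltonian because $C-a_1b_1$ is a hamiltonian path of $G_2-w$ with ends $a_1,b_1$, and the edges $a_1w,wb_1$ close it up. It has girth $g$ because a cycle avoiding $w$ lies in $\Gamma-M$ and has length $\ge g$, while a cycle through $w$ consists of two edges $wa,wb$ plus an $a$–$b$ path in $G_2-w\subseteq\Gamma-M$, whose length is $\ge g-2$ — namely $\ge g-1$ when $\{a,b\}=\{a_j,b_j\}$ (otherwise $a_jb_j$ would close a cycle shorter than $g$ in $\Gamma$) and $\ge g$ otherwise — and the fixed $g$-cycle survives in $\Gamma-M$.

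The odd case is the same with two extra vertices: pick a matching $M=\{a_1b_1,\dots,a_{r-1}b_{r-1}\}$ of $\Gamma$ with $a_1b_1\in E(C)$, the remaining edges off $C$, and all $2(r-1)$ endpoints pairwise far apart; then add vertices $w_1,w_2$, the edge $w_1w_2$, and join $w_1$ to all the $a_i$ and $w_2$ to all the $b_i$. This graph is $r$-regular on $n_1+2$ vertices, hamiltonian via the cycle $C-a_1b_1$ followed by $b_1w_2,w_2w_1,w_1a_1$, and of girth $g$ by the same analysis (the only fresh case is a cycle $w_1,a_j,\dots,b_j,w_2,w_1$ of length $3+\mathrm{dist}_{\Gamma-M}(a_j,b_j)\ge3+(g-1)\ge g$). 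Since $r$ is odd, $n_1$ is even, so $n_1+2$ is already the smallest admissible order above $n_1$, whereas for $r$ even there is no parity obstruction and $n_1+1$ works; this yields the stated pair.

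I expect the one genuinely delicate part to be keeping the girth exactly equal to $g$, and it is concentrated in the two places flagged above: the distance clause of Corollary~\ref{edgeRemovalCorollary} prevents short cycles from appearing when copies of $G'$ are glued in, and taking $k$ large gives $\Gamma$ enough diameter to host a matching whose endpoints are mutually far apart, which is what keeps short cycles from running through the newly inserted vertex or vertices. Everything else — the degree bookkeeping, the existence of such a matching (each copy of $G'$ has many more interior edges than $C$ uses, and an interior edge on $C$ as well), and the rerouting of the hamiltonian cycle through the unique matching edge lying on $C$ — is routine.
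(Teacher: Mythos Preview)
Your proposal is correct and follows essentially the same strategy as the paper: assemble $G_1$ out of copies of the gadget $G'$ from Corollary~\ref{edgeRemovalCorollary} (the paper arranges $r$, respectively $2r$, copies in a cycle, while you string $k$ of them in a necklace via the construction of Proposition~\ref{constructionProposition}), and then obtain $G_2$ by deleting a well-placed matching and attaching one or two fresh vertices to its endpoints, with the distance clause of the corollary controlling the girth throughout. One small slip: the phrase ``the $r$ endpoints are at pairwise distance $\ge g$ in $\Gamma$'' is literally false for the two ends of a single matching edge (they are adjacent), but your subsequent case analysis treats that pair separately and correctly via the girth of $\Gamma$, so the argument is unaffected. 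The only structural difference worth noting is that in the odd case you make the two new vertices adjacent and remove $r-1$ matching edges, whereas the paper keeps them non-adjacent and removes $r$ edges (splitting one of them between the two new vertices); both variants work.
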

\begin{proof}
Let $G'$ be a graph satisfying the conditions mentioned in Corollary~\ref{edgeRemovalCorollary}.

We first deal with the case when $r$ is even. Let $G_0', G_1', \ldots, G_{r-1}'$ be $r$ isomorphic copies of $G'$, call the vertices of degree $r-1$ in these graphs $u_0', v_0', u_1', v_1', \ldots, u_{r-1}', v_{r-1}'$ and call the hamiltonian $u_i'v_i'$-paths in these graphs $h_i'$ ($i \in \{0, 1, \ldots, r-1\}$). Now we construct the graph $G_1$ by taking the disjoint union of $G_0', G_1', \ldots, G_{r-1}'$  and adding $r$ edges $e_0, e_1, \ldots, e_{r-1}$ between respectively vertex $v_i'$ and $u_{i+1}'$, where indices are taken modulo $r$ ($i \in \{0, 1, \ldots, r-1\}$). $G_1$ is a hamiltonian $r$-regular graph with girth $g$ on $n_1 := r|V(G')|$ vertices. The edge set of the hamiltonian cycle in $G_1$ is the union of $\{e_0, e_1, \ldots, e_{r-1}\}$ and the edge sets $E(h_i')$ ($i \in \{0, 1, \ldots, r-1\}$). Let $a_0'$ be an edge in $E(h_0')$ between vertices $s_0'$ and $t_0'$ and let $b_i'$ be an edge in $E(G_i') \setminus E(h_i')$ between vertices $s_i'$ and $t_i'$ ($i \in \{2,4,\ldots,r-2\}$). The graph $G_2$ is obtained by adding a new vertex $w$ to $G_1$ and removing the edges $a_0', b_2', b_4', \ldots, b_{r-2}'$ and adding $r$ edges between respectively $w$ and $s_0', t_0', s_2', t_2', \ldots, s_{r-2}', t_{r-2}'$. A schematic representation of $G_2$ for $r=4$ is given in Fig.~\ref{fig:evenCaseFigCorollary}. $G_2$ is clearly an $r$-regular graph on $n_2 := r|V(G')|+1$ vertices. $G_2$ has girth $g$ because $G_1'$ contains a cycle of length $g$ and the distance between any two vertices from the set $\{s_0', t_0', s_2', t_2', \ldots, s_{r-2}', t_{r-2}'\}$ in the graph $G_2-w$ is at least $g-1$. Finally, $G_2$ also has a hamiltonian cycle with edge set:
$$\Big(E(h_0') \cup E(h_1') \cup \ldots \cup E(h_{r-1}') \cup \{e_0, e_1, \ldots, e_{r-1}\} \cup \{ws_0',wt_0'\}\Big) \setminus \{a_0'\}.$$

\begin{figure}[h]
\centering
\begin{subfigure}{0.3\linewidth}
\centering\includegraphics[width=1.1\linewidth]{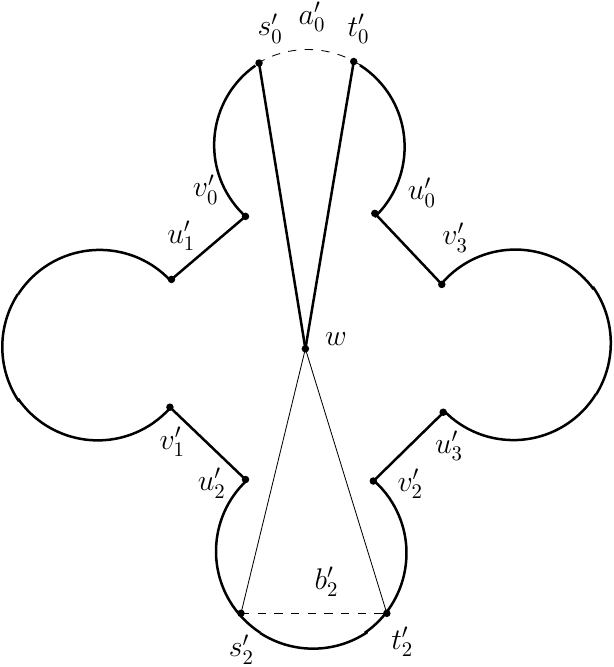}
\caption{}
\label{fig:evenCaseFigCorollary}
\end{subfigure}%
\hspace{0.7cm}
\begin{subfigure}{0.55\linewidth}
\centering\includegraphics[width=1.1\linewidth]{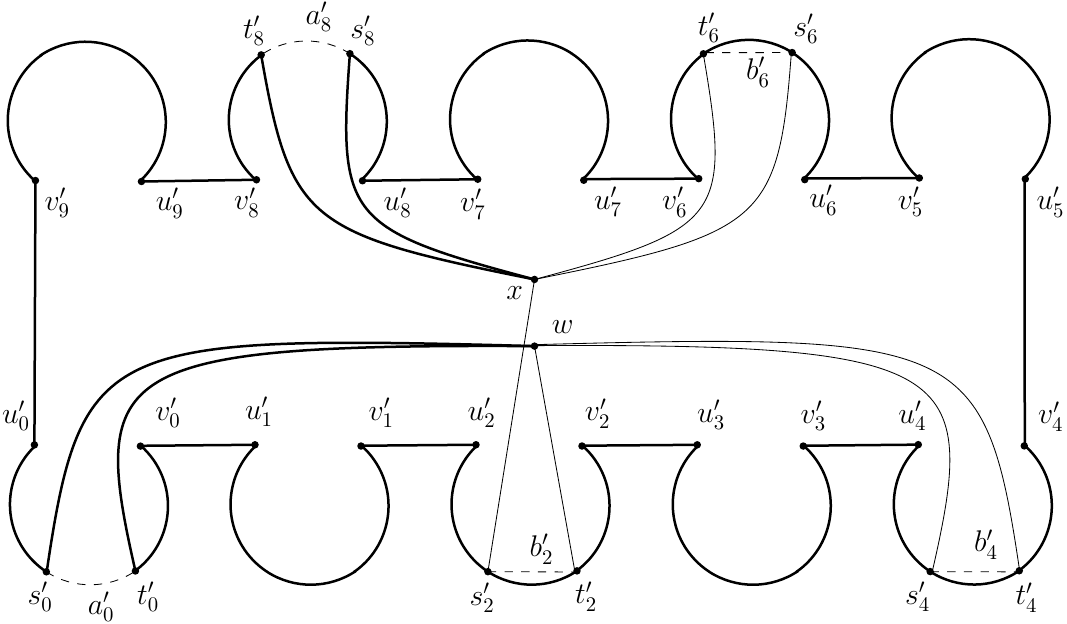}
\caption{}
\label{fig:oddCaseFigCorollary}
\end{subfigure}
\caption{A schematic representation of the graph $G_2$ for (a) $r=4$ and (b) $r=5$. A dashed line is an edge that is present in $G_1$, but not in $G_2$. The bold lines indicate the hamiltonian cycle in $G_2$. Each circular sector represents an isomorphic copy of $G'$.}
\label{fig:twoCases}
\end{figure}

We now deal with the case when $r$ is odd. Let $G_0', G_1', \ldots, G_{2r-1}'$ be $2r$ isomorphic copies of $G'$, call the vertices of degree $r-1$ in these graphs $u_0', v_0', u_1', v_1', \ldots, u_{2r-1}', v_{2r-1}'$ and call the hamiltonian $u_i'v_i'$-paths in these graphs $h_i'$ ($i \in \{0, 1, \ldots, 2r-1\}$). Now we construct the graph $G_1$ by taking the disjoint union of $G_0', G_1', \ldots, G_{2r-1}'$  and adding $2r$ edges $e_0, e_1, \ldots, e_{2r-1}$ between respectively vertex $v_i'$ and $u_{i+1}'$, where indices are taken modulo $2r$ ($i \in \{0, 1, \ldots, 2r-1\}$). $G_1$ is a hamiltonian $r$-regular graph with girth $g$ on $n_1 := 2r|V(G')|$ vertices. The edge set of the hamiltonian cycle in $G_1$ is the union of $\{e_0, e_1, \ldots, e_{2r-1}\}$ and the edge sets $E(h_i')$ ($i \in \{0, 1, \ldots, 2r-1\}$). Let $a_i'$ be an edge in $E(h_i')$ between vertices $s_i'$ and $t_i'$ ($i \in \{0,2r-2\}$) and let $b_i'$ be an edge in $E(G_i') \setminus E(h_i')$ between vertices $s_i'$ and $t_i'$ ($i \in \{2, 4, \ldots, 2r-4\}$). The graph $G_2$ is obtained by adding two new vertices $w$ and $x$ to $G_1$ and removing the edges $a_0', a_{2r-2}', b_2', b_4', \ldots, b_{2r-4}'$ and adding $r$ edges between respectively $w$ and $s_0', t_0', t_2', s_4', t_4', s_6', t_6', \ldots, s_{r-1}', t_{r-1}'$ and adding $r$ edges between respectively $x$ and $s_{2r-2}', t_{2r-2}', s_2', s_{r+1}', t_{r+1}', s_{r+3}', t_{r+3}', \ldots, s_{2r-4}', t_{2r-4}'$. A schematic representation of $G_2$ for $r=5$ is given in Fig.~\ref{fig:oddCaseFigCorollary}. $G_2$ is clearly an $r$-regular graph on $n_2 := 2r|V(G')|+2$ vertices. $G_2$ has girth $g$ because $G_1'$ contains a cycle of length $g$ and the distance between any two vertices from the set $\{s_0', t_0', s_2', t_2', \ldots, s_{2r-2}', t_{2r-2}'\}$ in the graph $G_2-w-x$ is at least $g-1$. Finally, $G_2$ also has a hamiltonian cycle with edge set:
$$\Big(E(h_0') \cup E(h_1') \cup \ldots \cup E(h_{2r-1}') \cup \{e_0, e_1, \ldots, e_{2r-1}\} \cup \{ws_0',wt_0',xs_{2r-2}',xt_{2r-2}'\}\Big) \setminus \{a_0', a_{2r-2}'\}.$$
\end{proof}

As an easy corollary, we obtain an infinite family of $r$-regular graphs with girth $g$.

\begin{corollary}
\label{infinitelyManyCorollary}
For all integers $r, g \geq 3$ there are infinitely many hamiltonian $r$-regular graphs with girth $g$.
\end{corollary}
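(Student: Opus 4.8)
The plan is to combine Theorem~\ref{consecutiveTheorem} with a simple vertex-splitting (or ``blow-up'') operation that increases the order of an $r$-regular graph with girth $g$ by a controlled amount while preserving both $r$-regularity and the girth. Concretely, once we have two hamiltonian $r$-regular graphs $G_1, G_2$ with girth $g$ whose orders $n_1, n_2$ differ by $1$ (when $r$ is even) or by $2$ (when $r$ is odd), it suffices to exhibit, for each $r$ and $g$, an operation that takes any hamiltonian $r$-regular graph with girth $g$ and produces another one on more vertices; iterating such an operation starting from $G_1$ and from $G_2$ and taking the union of the two resulting families will cover all sufficiently large orders for which $nr$ is even, hence infinitely many orders.

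The cleanest way to do this is to reuse the gluing idea already employed in the proof of Theorem~\ref{consecutiveTheorem}: take two disjoint hamiltonian $r$-regular graphs $F_1, F_2$ with girth $g$, pick an edge on a hamiltonian cycle of each that does \emph{not} lie on a shortest ($g$-)cycle (such an edge exists since $g$ is smaller than the order), delete both edges, and reconnect the four endpoints crosswise. If the two deleted edges are $a b$ in $F_1$ and $c d$ in $F_2$, we add edges $a c$ and $b d$; the result is $r$-regular, has order $|V(F_1)| + |V(F_2)|$, is hamiltonian (splice the two hamiltonian paths together through the two new edges), and has girth $g$ provided the $g$-cycles of $F_1$ and $F_2$ survive and no short cycle is created across the join — which holds because in $F_i$ the endpoints of the deleted edge are at distance $\geq g-1$ from each other after deletion, exactly as in Corollary~\ref{edgeRemovalCorollary}. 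Taking $F_1 = F_2$ equal to a fixed base graph $G^\ast$ (for instance $G_1$ from Theorem~\ref{consecutiveTheorem}) and iterating, we get hamiltonian $r$-regular graphs with girth $g$ on orders $n_1, 2n_1, 3n_1, \dots$, i.e.\ all multiples of $n_1$; gluing copies of $G^\ast$ onto $G_2$ instead gives all orders of the form $n_2 + k n_1$. Since $\gcd$ considerations together with $n_2 - n_1 \in \{1,2\}$ guarantee these residue classes eventually cover every integer $n$ (with $nr$ even) that is large enough, we obtain infinitely many such graphs, which is all that is claimed.

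A slightly slicker alternative, if one wants a genuinely one-graph-at-a-time construction, is to observe that the glue-two-copies operation above already increases the order by $|V(G^\ast)|$ each time, so a single starting graph suffices and no appeal to Theorem~\ref{consecutiveTheorem} is strictly needed for the weaker statement ``infinitely many''; Theorem~\ref{consecutiveTheorem} is only needed later for the $h_2$ bound where one wants \emph{consecutive} (or near-consecutive) orders. For the corollary as stated I would therefore present the short argument: fix $G^\ast$ from Theorem~\ref{consecutiveTheorem} (or directly from Sachs' Theorem~\ref{SachsTheorem}), note it is hamiltonian $r$-regular with girth $g$, apply the cross-join of two copies of $G^\ast$ repeatedly, and record that at each step $r$-regularity, hamiltonicity and girth $g$ are preserved while the order strictly increases, yielding an infinite family.

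The main obstacle is the girth bookkeeping for the cross-join: one must verify that rewiring the four endpoints does not create a cycle shorter than $g$. This is handled by the distance condition — in each copy, after deleting the chosen hamiltonian-cycle edge the two endpoints are at distance at least $g-1$ (this is precisely why Corollary~\ref{edgeRemovalCorollary} insisted on choosing an edge \emph{outside} a shortest cycle), so any cycle through a new cross-edge must traverse at least $g-1$ edges inside one copy before returning, giving length at least $g$. The only mild subtlety is ensuring we can always pick such an edge in the new graph as well so that the iteration continues; but the hamiltonian cycle of the glued graph contains the old hamiltonian paths, which are long, while the girth-$g$ cycle is untouched inside one of the copies, so an edge of the hamiltonian cycle avoiding that fixed $g$-cycle is again available. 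Everything else is routine.
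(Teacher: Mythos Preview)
Your proposal is correct and rests on the same core idea as the paper: iterate a gluing construction that preserves hamiltonicity, $r$-regularity and girth $g$ while strictly increasing the order. The paper's proof is a one-liner---it simply observes that the construction already carried out in the proof of Theorem~\ref{consecutiveTheorem} (which starts from an arbitrary hamiltonian $r$-regular graph $G$ with girth $g$ and outputs strictly larger such graphs $G_1,G_2$) can be iterated with $G_2$ as the new seed. You instead introduce a separate two-copy cross-join and verify the girth bookkeeping from scratch; this is a perfectly valid variant, and indeed slightly more elementary, but the paper gets it for free by reusing Theorem~\ref{consecutiveTheorem}. Your additional discussion of covering all large orders via residue-class/$\gcd$ arguments is unnecessary here, since the corollary only asks for infinitely many graphs, not for all sufficiently large admissible orders---you correctly note this yourself, so the short version of your argument suffices.
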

\begin{proof}
In the proof of Theorem~\ref{consecutiveTheorem}, we started from an arbitrary hamiltonian $r$-regular graph $G$ with girth $g$ to construct larger hamiltonian $r$-regular graphs $G_1$ and $G_2$ with girth $g$. We can repeat this process, this time constructing larger graphs $G_3$ and $G_4$ starting from $G=G_2$ and so on.
\end{proof}

We now use the previous theorems and corollaries to arrive at the main theorem of this section.

\begin{theorem}
For all integers $r,g \geq 3$, there exist constants $n_{r,g}$ and $c_{r,g}$ depending only on $r$ and $g$ such that for all integers $n \geq n_{r,g}$ for which $nr$ is even, we have $h_2(n,r,g) \leq c_{r,g}$.
\end{theorem}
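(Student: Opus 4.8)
The plan is to construct, for each fixed $r$ and $g$ and every sufficiently large admissible order $n$, a non-hamiltonian $2$-connected $r$-regular graph with girth $g$ having only a bounded number of longest cycles. The natural building block is the graph $G'$ from Corollary~\ref{edgeRemovalCorollary}: it has two vertices $u,v$ of degree $r-1$, all other vertices of degree $r$, girth $g$, a hamiltonian $uv$-path, and distance at least $g-1$ between $u$ and $v$. First I would fix a small \emph{core} non-hamiltonian $2$-connected $r$-regular graph $C$ with girth $g$ (for instance, a modification of one of Zamfirescu's examples, or a standard construction; any fixed such graph works) together with a designated edge $f$ lying on every longest cycle of $C$, where the longest cycle misses exactly one vertex. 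The key construction step is then to substitute, in place of the edge $f$, a long "chain" of copies of $G'$ glued end-to-end along their $uv$-path endpoints: replacing an edge $f=ab$ by identifying $a$ with $u_1'$, $v_k'$ with $b$, and $v_i'$ with $u_{i+1}'$ for consecutive copies (possibly with a single extra edge to fix parity, exactly as in the proof of Theorem~\ref{consecutiveTheorem}). This keeps the graph $r$-regular, keeps girth $g$ (each copy contributes a $g$-cycle and the gluing points are far apart), and — crucially — preserves $2$-connectedness and non-hamiltonicity, since any cycle through the chain is forced to traverse every copy of $G'$ via a hamiltonian $u_i'v_i'$-path, so the one vertex missed by the core's longest cycle is still missed.

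Next I would argue that the longest cycles of the resulting graph $G_k$ are in bijection with (longest cycle of $C$ through $f$) $\times$ (choice of hamiltonian $u_i'v_i'$-path in each copy of $G'$). Since $G'$ is a \emph{fixed} graph, it has some fixed finite number $p$ of hamiltonian $uv$-paths, so $G_k$ has at most (number of longest cycles of $C$) $\cdot\, p^{\,k}$ longest cycles — which is bounded for fixed $k$ but \emph{not} as $k\to\infty$. To get a genuinely constant bound I would instead choose $G'$ (via Sachs' theorem plus the edge-removal corollary) so that it has a \emph{unique} hamiltonian $uv$-path; this can be arranged by taking a hamiltonian $r$-regular graph with girth $g$ whose hamiltonian cycle is unique — such graphs exist for suitable degree sets by the work cited in the introduction (Fleischner, Brinkmann, Entringer–Swart), and one reduces the general $r$ case to these by the same disjoint-union-and-new-vertex trick used throughout Theorem~\ref{consecutiveTheorem} to adjust regularity without creating new hamiltonian paths between the terminals. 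With $p=1$, $G_k$ has exactly as many longest cycles as $C$ does, i.e. a constant $c_{r,g}$ depending only on $r,g$, for every $k$.

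Finally I would handle the \emph{order} bookkeeping: the chain of $k$ copies of $G'$ changes $|V|$ by $k\,|V(G')|$ plus possibly one parity vertex, so the attainable orders form an arithmetic progression with common difference $|V(G')|$ (or a bounded set of such progressions once the parity gadget is allowed). To fill in \emph{all} large $n$ with $nr$ even, I would combine chains of the two graphs $G'$-derived blocks whose orders differ by $1$ (if $r$ even) or $2$ (if $r$ odd) — exactly the content of Theorem~\ref{consecutiveTheorem}, applied with "unique hamiltonian $uv$-path" blocks rather than arbitrary ones — so that by taking appropriate numbers of each block one realizes every residue. Setting $n_{r,g}$ to be the point beyond which this covering works gives $h_2(n,r,g)\le c_{r,g}$ for all $n\ge n_{r,g}$ with $nr$ even.

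The main obstacle I expect is the \emph{uniqueness} requirement on the hamiltonian $uv$-path in the block $G'$: Sachs' theorem only gives existence of a hamiltonian $r$-regular graph with girth $g$, not one with a unique hamiltonian cycle, and the known unique-hamiltonian-cycle constructions are only available for restricted degree sets. Bridging this gap — either by a direct construction of an $r$-regular girth-$g$ graph on which deleting one edge leaves a unique hamiltonian path between the endpoints, or by a gadget that "locks in" one path while padding the degrees up to $r$ without introducing alternatives — is the technical heart of the argument; everything else (regularity, girth, $2$-connectivity, non-hamiltonicity, order coverage) follows the template already established in this section.
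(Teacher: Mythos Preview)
Your proposal has a genuine gap at exactly the point you yourself flag as ``the technical heart'': the requirement that the padding block $G'$ have a \emph{unique} hamiltonian $uv$-path. For odd $r$ this is not merely hard --- it is provably impossible. Adding back the edge $uv$ to such a $G'$ produces an $r$-regular graph in which $uv$ lies on exactly one hamiltonian cycle; but by Thomason's extension of Smith's theorem, in any graph whose vertices all have odd degree every edge lies on an \emph{even} number of hamiltonian cycles. Hence $p\ge 2$ whenever $r$ is odd, and your chain of $k$ blocks contributes a factor $p^{\,k}$ that blows up with $n$. For even $r$ you offer no construction either: the Fleischner, Brinkmann and Entringer--Swart results you cite produce uniquely hamiltonian graphs only for certain \emph{mixed} degree sequences, and nothing in Theorem~\ref{consecutiveTheorem} converts them into $r$-regular blocks while preserving uniqueness (that theorem never claims to control the number of hamiltonian cycles, and the parity obstruction just noted shows no such trick can exist for odd $r$). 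The existence of your ``core'' graph $C$ with an edge on every longest cycle and exactly one missed vertex is likewise asserted rather than established.

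The paper sidesteps the whole issue by a different idea you should absorb: instead of forcing the longest cycle \emph{through} the padding along a unique route, it arranges the padding so that the longest cycle \emph{bypasses} it. Each padding block $H_j$ consists of $r-2$ copies of $G'$ hung in parallel between two hub vertices $w_j,x_j$; the hubs are chained by edges $w_jw_{j+1}$ and $x_jx_{j+1}$, with two fixed larger pieces $G_3'$ capping the ends. Since every copy of $G'$ is attached to the rest of the graph by only two edges, no cycle can meet three or more such copies; consequently the longest cycle runs straight along the hub chain, entering only the two end pieces $G_3'$. The number of longest cycles is then $h^2$, where $h$ counts hamiltonian $uv$-paths in the \emph{fixed} graph $G_3'$ --- a constant $c_{r,g}$ independent of how many padding blocks appear. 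Two block sizes $H_1,H_2$ (built from the two orders supplied by Theorem~\ref{consecutiveTheorem}) cover all large admissible $n$ exactly as you outline. The lesson: the constant bound comes not from uniqueness inside the variable part, but from making the variable part invisible to the longest cycles.
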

\begin{proof}
Let $G_1$ and $G_2$ be hamiltonian $r$-regular graphs with girth $g$ on respectively $n_1$ and $n_2$ vertices such that $n_2=n_1+1$ if $r$ is even and $n_2=n_1+2$ is $r$ is odd (such graphs exist because of Theorem~\ref{consecutiveTheorem}). Let $G_3$ be a hamiltonian $r$-regular graph with girth $g$ on $n_3>n_2$ vertices (such a graph exists because of Corollary~\ref{infinitelyManyCorollary}). For $i \in \{1, 2, 3\}$, construct the graph $G_i'$ on $n_i$ vertices which has two distinct vertices $u_i$ and $v_i$ of degree $r-1$ and all other vertices of degree $r$ such that 1) the distance between $u$ and $v$ is at least $g-1$, 2) $G_i'$ has a hamiltonian $u_{i}v_{i}$-path and 3) $G_i'$ has girth $g$ ($G_i'$ can be obtained by applying the construction mentioned in the proof of Corollary~\ref{edgeRemovalCorollary} to $G_i$). Construct the graph $H_1$ by taking the disjoint union of $r-2$ copies of $G_1'$ and adding two vertices $u_1$ and $v_1$ and edges between $u_1$ and one vertex of degree $r-1$ in each of the $r-2$ isomorphic copies of $G_1'$ and edges between $v_1$ and the other vertex of degree $r-1$ in each of those copies (i.e. $u_1$ and $v_1$ have degree $r-1$ and all other vertices have degree $r$). In a similar fashion, construct the graph $H_2$, but this time replace one of the $r-2$ copies of $G_1'$ by a copy of $G_2'$. Note that $|V(H_1)|=(r-2)|V(G_1')|+2$ and that $|V(H_2)|=|V(H_1)|+1$ when $r$ is even and $|V(H_2)|=|V(H_1)|+2$ when $r$ is odd. For non-negative integers $\ell$ and $m$ satisfying $\ell+m \geq 1$, we construct the graph $G_{\ell,m}$ as follows. We take the disjoint union of $\ell$ copies $H_{1,1}, H_{1,2}, \ldots, H_{1,\ell}$ of $H_1$, $m$ copies $H_{2,1}, H_{2,2}, \ldots, H_{2,m}$ of $H_2$ and two copies $G_{3,1}'$ and $G_{3,2}'$ of $G_3'$. Let $w_1, x_1, w_{2+\ell+m}$ and $x_{2+\ell+m}$ be the two vertices of degree $r-1$ in respectively $G_{3,1}'$ and $G_{3,2}'$ and let $w_{1+j}$ and $x_{1+j}$, and $w_{1+\ell+k}$ and  $x_{1+\ell+k}$ be the two vertices of degree $r-1$ in respectively $H_{1,j}$ ($j \in \{1, 2, \ldots, \ell\}$) and $H_{2,k}$ ($k \in \{1, 2, \ldots, m\}$). Now $G_{\ell,m}$ is obtained by adding an edge $w_{i}w_{i+1}$ and an edge $x_{i}x_{i+1}$ for each $i \in \{1,2,\ldots,1+\ell+m\}$. Fig.~\ref{fig:constantUpperBound} shows a schematic representation of the graph $G_{1,1}$ for $r=4$.

\begin{figure}[h!t]
\centering
\includegraphics[width=.7\textwidth]{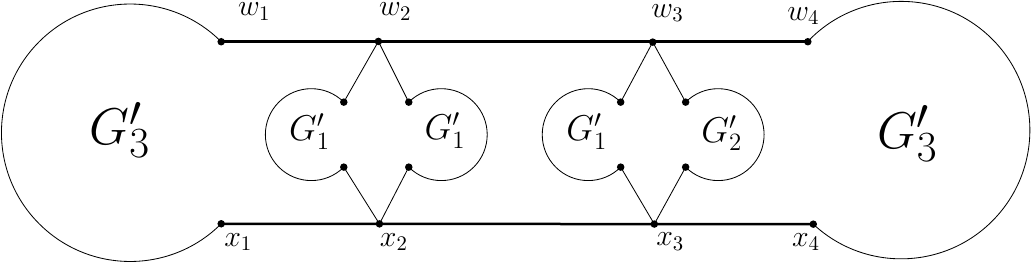}
\caption{A schematic representation of the graph $G_{1,1}$ for $r=4$. The circular sectors represent an isomorphic copy of $G_{i'}$ (for some $i \in \{1,2,3\}$). The bold edges are contained in every longest cycle of $G_{1,1}$.}
\label{fig:constantUpperBound}
\end{figure}

The graph $G_{\ell,m}$ is clearly $r$-regular. $G_{\ell,m}$ also has girth $g$, because $G_1'$, $G_2'$ and $G_3'$ all have girth $g$ and the distance between the two vertices of degree $r-1$ in each copy of $G_1'$, $G_2'$ or $G_3'$ is at least $g-1$. $G_{\ell,m}$ has vertex connectivity 2 (e.g. $\{w_1, x_1\}$ is a vertex cut), because none of the vertices from $\{w_1,x_1,w_2,x_2,\ldots,w_{2+\ell+m},x_{2+\ell+m}\}$ are cut vertices. Moreover, for each copy of $G_1'$, $G_2'$ or $G_3'$, there does not exist a vertex $y$ such that its removal results in a connected component that contains both vertices of degree $r-1$, because $G_1'$, $G_2'$ and $G_3'$ contain a hamiltonian path between these two vertices.

There does not exist a cycle in $G_{\ell,m}$ containing vertices from three or more copies of $G_i'$ ($i \in \{1,2,3\}$). Therefore, $G_{\ell,m}$ is not hamiltonian and every longest cycle of $G_{\ell,m}$ has length $2(\ell+m+|V(G_3')|)$. Every such cycle contains the edges $w_{i}w_{i+1}$ and $x_{i}x_{i+1}$ ($i \in \{1,2,\ldots,1+\ell+m\}$) and the edges of a hamiltonian path (between the two vertices of degree $r-1$ in the graph $G_3'$) in each of the two copies of $G_3'$. If $h$ is the number of pairwise distinct hamiltonian paths between the two vertices of degree $r-1$ in $G_3'$, then the number of longest cycles in $G_{\ell,m}$ is equal to $c_{r,g} := h^2$ (note that this does not depend on the choice of $\ell$ and $m$).

Let $a$ be equal to $1$ if $r$ is even and equal to $2$ if $r$ is odd. The graph $G_{\ell,m}$ has $2|V(G_3')|+(\ell+m)((r-2)|V(G_1')|+2)+am$ vertices. Define $n_{r,g} := 2|V(G_3')|+((r-2)|V(G_1')|+2)^2$. For all integers $n \geq n_{r,g}$ such that $nr$ is even, let $q_1$  and $0 \leq r_1 < (r-2)|V(G_1')|+2$ be integers such that $n=q_1((r-2)|V(G_1')|+2)+r_1$. Note that $a$ is a divisor of $r_1$. If we choose $\ell=q_1-\frac{r_1}{a}$ and $m=\frac{r_1}{a}$, then the graph $G_{\ell,m}$ is a graph on $n$ vertices containing $c_{r,g}$ pairwise distinct longest cycles and thus $h_2(n,r,g) \leq c_{r,g}$.
\end{proof}

We give an example of a 3-regular 2-connected triangle-free non-hamiltonian graph containing precisely four pairwise distinct longest cycles in Fig.~\ref{fig:zamfirescuQuestion}. This can be seen by realizing that the set of vertices $\{1, 10\}$ is a cutset and the graph induced by the vertices $2, 3, \ldots ,9$ has precisely two distinct hamiltonian paths between vertex 2 and 9 (and similarly for the graph induced by the vertices $11, 12, \ldots, 18$). The graph has been made available on \textit{House of Graphs} \cite{CDG23} at \url{https://houseofgraphs.org/graphs/50421}.

\begin{figure}[h!t]
\centering
\includegraphics[width=.7\textwidth]{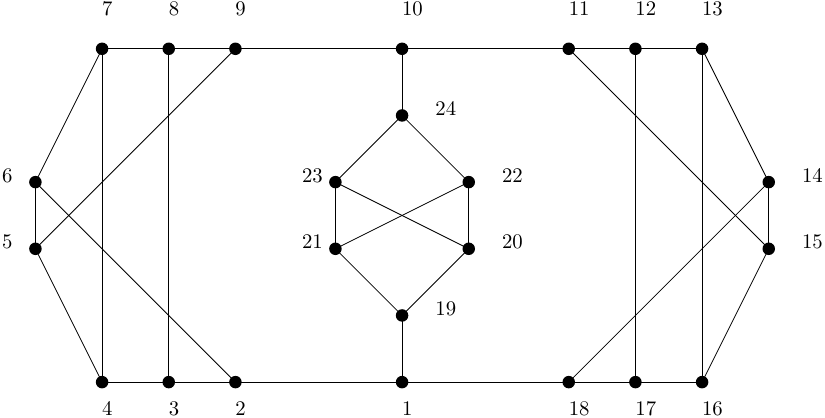}
\caption{A 3-regular 2-connected triangle-free non-hamiltonian graph on twenty-four vertices containing precisely four pairwise distinct longest cycles (of length 18): a) 1, 2, 3, 4, 5, 6, 7, 8, 9, 10, 11, 12, 13, 14, 15, 16, 17, 18, 1; b) 1, 2, 3, 4, 5, 6, 7, 8, 9, 10, 11, 15, 16, 17, 12, 13, 14, 18, 1; c) 1, 2, 6, 7, 8, 3, 4, 5, 9, 10, 11, 12, 13, 14, 15, 16, 17, 18, 1 and d) 1, 2, 6, 7, 8, 3, 4, 5, 9, 10, 11, 15, 16, 17, 12, 13, 14, 18, 1.}
\label{fig:zamfirescuQuestion}
\end{figure}

We computationally verified, using two independent algorithms, that there are no 3-regular 2-connected triangle-free non-hamiltonian graphs on at most 28 vertices containing less than four pairwise distinct longest cycles. We refer the interested reader to the Appendix for more details about the computations.
 
\section{Concluding remarks} \label{sec:conclusions}

Finally, we would like to consider relaxations of a question asked by Zamfirescu~\cite{Za22}: are there 3-regular 2-connected triangle-free non-hamiltonian graphs containing a unique longest cycle? Our computations imply that such a graph has to contain at least 30 vertices if it exists. We ask more generally: are there integers $r \geq 3$ and $n,g \geq 4$ for which $h_2(n,r,g)=1$? We are also interested in graphs that are close to being 3-regular. More specifically, an interesting relaxation is obtained by relaxing the 3-regular constraint: amongst all 2-connected triangle-free non-hamiltonian graphs with minimum degree 3 containing a unique longest cycle, what is the minimum number of vertices with a degree different from 3? We note that one can construct such a graph on sixty vertices containing fifty-six 3-valent vertices and four 4-valent vertices. More specifically, Royle~\cite{Ro17} constructed a triangle-free graph with sixteen 3-valent and two 4-valent vertices containing a unique hamiltonian cycle (further examples are given in~\cite{GMZ20} and~\cite{Se15}), whereas Fig.~\ref{fig:zamfirescuQuestion} shows a 3-regular 2-connected triangle-free non-hamiltonian graph on twenty-four vertices with a unique longest cycle containing simultaneously the edges between vertices 2 and 3 and vertices 17 and 18. If we take two copies of Royle's graph, remove arbitrary edges $uv$ and $u'v'$ (one in each copy) on the unique hamiltonian cycle of this graph, remove the edges between vertices 2 and 3 and vertices 17 and 18 from the graph shown in Fig.~\ref{fig:zamfirescuQuestion} and add four edges between vertices 2 and $u$, 3 and $v$, 17 and $u'$ and 18 and $v'$, we obtain the desired graph on $18+18+24=60$ vertices. The graph has been made available at \url{https://houseofgraphs.org/graphs/50422}. We also note that if one replaces ``minimum degree three'' by ``maximum degree three'', one can subdivide the edge between vertices 2 and 3 and the edge between vertices 17 and 18 in the graph shown in Fig.~\ref{fig:zamfirescuQuestion} to obtain a 2-connected triangle-free non-hamiltonian graph containing a unique longest cycle with two vertices that are not 3-valent.

\bigskip

\noindent\textbf{Acknowledgements.} The author would like to thank Carol T. Zamfirescu and Jan Goedgebeur for a useful discussion about this work. The author is a Postdoctoral Fellow of the Research Foundation Flanders (FWO) with contract number 1222524N. The computational resources and services used in this work were provided by the VSC (Flemish Supercomputer Center), funded by the Research Foundation - Flanders (FWO) and the Flemish Government -- department EWI.

\section*{Appendix}\label{sec: appendix}

We used the graph generator \textit{snarkhunter}~\cite{BGM11} for generating all 3-regular 2-connected triangle-free graphs up to 28 vertices. We implemented two independent algorithms for enumerating all pairwise distinct longest cycles of a graph and used 300 cores from a computer cluster to enumerate such cycles for all generated graphs. The first algorithm consecutively considers the graphs induced by the first $3, 4, \ldots, n$ vertices, where $n$ denotes the order of the graph, and enumerates all cycles containing the highest numbered vertex. It uses a simple recursion scheme that starts from a path $P$ containing three vertices and recursively extends one end of $P$ in all possible ways. The recursion stops as soon as there is no path using only unvisited vertices between the two degree 1 vertices in the graph induced by the vertices of $P$. The second algorithm is a modification of previous code used in~\cite{JLD20} for enumerating all hamiltonian cycles of a graph using Held-Karp's algorithm~\cite{HK62}. Both algorithms were in agreement with each other. The code is made publicly available on GitHub~\cite{Jooken23}.
	

\begin{thebibliography}{99}






\bibitem{Br22}
G. Brinkmann.
Uniquely hamiltonian graphs for many sets of degrees.
arXiv identifier 2211.08105 (2022).

\bibitem{BGM11}
G. Brinkmann, J. Goedgebeur, and B.D. McKay. 
Generation of cubic graphs. 
\emph{Discrete Mathematics and Theoretical Computer Science} \textbf{13} (2011) 69--80.

\bibitem{CT12}
G. L. Chia and C. Thomassen. 
On the number of longest and almost longest cycles in cubic graphs. 
\emph{Ars Combin.} \textbf{104} (2012) 307--320.

\bibitem{CDG23}
K. Coolsaet, S. D’hondt, and J. Goedgebeur. 
House of Graphs 2.0: a database of interesting graphs and more. 
\emph{Discrete Appl. Math.} \textbf{325} (2023) 97--107. 
Available at \url{https://houseofgraphs.org/}.


\bibitem{ES80}
R. C. Entringer and H. Swart.
Spanning cycles of nearly cubic graphs.
\emph{J. Combin. Theory, Ser. B} \textbf{29} (1980) 303--309.


\bibitem{Fl14}
H. Fleischner.
Uniquely Hamiltonian Graphs of Minimum Degree 4.
\emph{J. Graph Theory} \textbf{75} (2014) 167--177.






\bibitem{GKN19}
A. Gir\~{a}o, T. Kittipassorn, and B. Narayanan. 
Long cycles in Hamiltonian graphs. 
\emph{Israel J. Math.} \textbf{229} (2019) 269--285.

\bibitem{GJLSZ22}
J. Goedgebeur, J. Jooken, O.-H. S. Lo, B. Seamone, and C. T. Zamfirescu. 
Few hamiltonian cycles in graphs with one or two vertex degrees. 
Submitted (2022), arXiv identifier 2211.08105.

\bibitem{GMZ20}
J. Goedgebeur, B. Meersman, and C. T. Zamfirescu. 
Graphs with few hamiltonian cycles.
\emph{Math. Comp.} \textbf{89} (2020) 965--991.

\bibitem{HSV07}
P. Haxell, B. Seamone, and J. Verstraete.
Independent dominating sets and hamiltonian cycles.
\emph{J. Graph Theory} \textbf{54} (2007) 233--244.

\bibitem{Ha18}
M. Haythorpe.
On the Minimum Number of Hamiltonian Cycles in Regular Graphs.
\emph{Experim. Math.} \textbf{27} (2018) 426--430.

\bibitem{HK62}
M. Held and R. M. Karp. 
A dynamic programming approach to sequencing problems.
\emph{J. Soc. Ind. Appl. Math.} \textbf{10} (1962) 196--210.

\bibitem{Jooken23}
J. Jooken.
Code for enumerating longest cycles for the paper “Improved asymptotic upper bounds for the minimum number of pairwise distinct longest cycles in regular graphs” (2023). 
See: \url{https://github.com/JorikJooken/longestCycles}

\bibitem{JLD20}
J. Jooken, P. Leyman, and P. De Causmaecker. 
A multi-start local search algorithm for the hamiltonian completion problem on undirected graphs. 
\emph{J. Heuristics} \textbf{26} (2020) 743--769.








\bibitem{Ro17}
G. Royle.
The smallest uniquely hamiltonian graph with minimum degree at least 3 (2017).
\url{https://mathoverflow.net/questions/255784/what-is-the-smallest-uniquely-hamiltonian-graph-with-minimum-degree-at-least-3/}

\bibitem{Sa63}
H. Sachs.
Regular graphs with given girth and restricted circuits.
\emph{J. London Math. Soc.} \textbf{38} (1963) 423--429.

\bibitem{Sc89}
A. J. Schwenk. 
Enumeration of Hamiltonian cycles in certain generalized Petersen graphs. 
\emph{J. Combin. Theory, Ser. B}, \textbf{47} (1989) 53--59.

\bibitem{Se15}
B. Seamone. 
On uniquely Hamiltonian claw-free and triangle-free graphs. 
\emph{Discussiones Mathematicae Graph Theory}, \textbf{35} (2015) 207--214.

\bibitem{Sh75}
J. Sheehan.
The multiplicity of hamiltonian circuits in a graph.
\emph{Recent advances in graph theory (Proc. Second Czechoslovak Sympos., Prague, 1974), Academia, Prague} (1975) 477--480.

\bibitem{Th78}
A. G. Thomason.
Hamiltonian Cycles and Uniquely Edge Colourable Graphs. 
\emph{Ann. Discrete Math.} \textbf{3} (1978) 259--268.

\bibitem{Th98}
C. Thomassen.
Independent dominating sets and a second hamiltonian cycle in regular graphs.
\emph{J. Combin. Theory, Ser. B} \textbf{72} (1998) 104--109.

\bibitem{TZ21}
C. Thomassen and C. T. Zamfirescu. 
4-regular 4-connected Hamiltonian graphs with a bounded number of Hamiltonian cycles. 
\emph{Australasian J. Combin.} \textbf{81} (2021) 334--338.

\bibitem{Tu46}
W. T. Tutte.
On hamiltonian circuits.
\emph{J. London Math. Soc.} \textbf{21} (1946) 98--101.

\bibitem{Tu76}
W. T. Tutte. 
Hamiltonian circuits. 
\emph{Colloquio Internazionale sulle Teorie Combinatorie. Atti dei Convegni Lincei 17, Accad. Naz. Lincei, Roma I} (1976) 193--199.






\bibitem{Za22}
C. T. Zamfirescu.
Regular graphs with few longest cycles.
\emph{SIAM J. Discrete Math.} \textbf{36} (2022) 755--776.

\end{thebibliography}
\end{document}